\def\ind{\operatorname{ind}}
\newcommand{\Vect}{\mathop{\rm Vect}}
\def\End{\operatorname{End}}
\def\Td{\operatorname{Td}}
\def\td{\mathrm{Td}}
\def\ch{\operatorname{ch}}
\def\om{\omega}
\def\Ga{G}
\def\Om{\Omega}
\DeclareMathOperator\tr{tr}
\def\lra{\longrightarrow}
\newtheorem{theorem}{Theorem}
\newtheorem{proposition}{Proposition}
\newtheorem{corollary}{Corollary}
\theoremstyle{remark}
\newtheorem{remark}{Remark}
\newtheorem{definition}{Definition}
\newtheorem{assumption}{Assumption}
 \theoremstyle{definition}
 \theoremstyle{remark}
 \numberwithin{equation}{section}
\begin{document}
%
%
%
%
%
%
%
%
%
\title[Elliptic theory for operators associated with diffeomorphisms]
 {Elliptic theory for operators associated with \\
diffeomorphisms of smooth manifolds}
\author[Anton Savin]{Anton Savin}

\address{%
Peoples' Friendship University of Russia\\
 Miklukho-Maklaya str. 6\\
117198 Moscow\\
Russia}

\address{%
Leibniz University of Hannover, Institut fur Analysis\\
 Welfengarten 1\\
30167 Hannover\\
Germany}

\email{antonsavin@mail.ru}

\thanks{The work was partially supported by RFBR grant NN~12-01-00577.}
\author[Boris Sternin]{Boris Sternin}
\address{%
Peoples' Friendship University of Russia\\
Miklukho-Maklaya str. 6\\
117198 Moscow\\
Russia}
\address{%
Leibniz University of Hannover, Institut fur Analysis\\
Welfengarten 1\\
30167 Hannover\\
Germany}
\email{sternin@mail.ru}
\subjclass{Primary 58J20; Secondary 58J28, 58J32, 19K56, 46L80, 58J22}

\keywords{elliptic operator,  index, index formula, cyclic cohomology, diffeomorphism, $G$-operator}

\date{January 31, 2012}

\begin{abstract}
In this paper we give a survey of elliptic theory for operators associated with
diffeomorphisms of smooth manifolds. Such operators appear naturally in
analysis, geometry and mathematical physics. We survey classical results as
well as results obtained recently. The paper consists of an introduction and
three sections. In the introduction we give a general overview of the 
area of research.  For the reader's convenience here we tried to keep special
terminology to a minimum. In the remaining sections we give detailed
formulations of the most  important results mentioned in the introduction.
\end{abstract}

\maketitle

\section*{Introduction}
\addcontentsline{toc}{section}{Introduction}

The aim of this paper is to give a survey of   index theory for  elliptic
operators associated with diffeomorphisms of smooth manifolds. Recall that the
construction of index theory includes the following main stages:

 \begin{enumerate}
    \item[1)] (finiteness theorem) Here one has to give conditions,
    called ellipticity conditions, under which the operators under consideration are Fredholm in relevant function spaces;
    \item[2)] (index theorem) Here one presents and proves an index formula, that is, an
    expression for the index of an elliptic operator in terms of topological invariants
    of the symbol of the operator and the manifold, on which the operator is defined.
\end{enumerate}

\vspace{1mm}

The first index theorem on higher-dimensional manifolds was the celebrated
Atiyah--Singer  theorem \cite{AtSi0} on the index of elliptic
pseudodifferential operators   ($\psi$DO) on a closed smooth manifold. This
theorem appeared as an answer to a question posed by Gelfand~\cite{Gel1}. Note
that the statement and the proof of the index formula relied on
most up to date methods of analysis and topology and stimulated interactions
between them.

After that index theorems were obtained for many other classes of operators.
In this paper we consider the class of operators associated with
diffeomorphisms of closed smooth manifolds. One of advantages of this theory
is that, besides the mentioned interaction of analysis and topology, here an
important role is played by the theory of dynamical systems.

\vspace{2mm}

\noindent\textbf{1. Elliptic operators for a discrete group of diffeomorphisms
(analytical aspects).} The theory of elliptic operators associated with
diffeomorphisms and the corresponding theory of boundary value problems with
nonlocal boundary conditions go back to the paper by T.~Carleman \cite{Carl1},
where he considered the problem of finding a holomorphic function in a
bounded domain   $\Omega$, which satisfies a nonlocal boundary condition, which
relates the values of the function at a point   $x\in \partial\Omega$ of the
boundary and at the point   $g(x)\in\partial \Omega$, where $g:\partial
\Omega\to \partial \Omega$ is a smooth mapping of period two: $g^2=Id.$ A
reduction of this boundary value problem to the boundary does not  give 
usual integral equation as it war the case with the local boundary condition. Rather, it gives an
integro-functional equation, which we call  \emph{equation associated with
diffeomorphism   $g$}. This paper motivated the study of a more general class of
operators on closed smooth manifolds. Let us give the general definition of
such operators.

On a closed smooth manifold   $M$ we consider operators of the form
\begin{equation}\label{eq-ooper1}
    D=\sum_{g\in G} D_g T_g:C^\infty(M)\to C^\infty(M),
\end{equation}
where:
\begin{itemize}
\item  $G$ is a discrete group of diffeomorphisms of $M$;
\item $
 (T_gu)(x)=u(g^{-1}(x))
$ is the shift operator corresponding to the diffeomorphism $g$; \item
$\{D_g\}$ is a collection of pseudodifferential operators of order $\le m$;
\item $C^\infty(M)$ is the space of smooth functions on   $M$. Of course,  one
can also consider operators acting in sections of vector bundles.
\end{itemize}
Operators \eqref{eq-ooper1}  will be called {\em  $G$-pseudodifferential
operators} ($G$-$\psi$DO) or simply $G$-operators.\footnote{In the literature
such operators are also called functional-differential, nonlocal,
noncommutative operators and operators with shifts.} Such operators were intensively studied
(see the fundamental works of  Antonevich~\cite{Ant2,Ant1}, and also the papers
\cite{AnLe1,AnLe3} and the references cited there). In particular, and extremely important notion of
 {\em symbol of $G$-operator} was introduced there.  More
precisely,  two definitions of the symbol of a $G$-operator were given.  First, the
symbol was defined as a function on the cotangent bundle   $T^*M$ of the
manifold taking values in operators acting on the space   $l^2(G)$ of
square integrable functions on the group. Second, the symbol was defined as
an element of the crossed product~\cite{Zell1}  of the algebra of continuous
functions on the cosphere bundle    $S^*M$ of the manifold and the group $G$.
Further, we introduce the ellipticity condition in this situation, which is the
requirement of invertibility of the symbol of the operator. It was proved that
the  two ellipticity conditions (they correspond to the two definitions of the
symbol) are equivalent under quite general assumptions.  Ellipticity implies
Fredholm property of the operator in suitable Sobolev spaces of type $H^s$.

Let us note here one essential difference between the theory of elliptic
$G$-$\psi$DO and a similar theory of $\psi$DO. Namely, examples show
(see~\cite{AnBr1,AnLe2,Ross1}) that the ellipticity (and  the Fredholm
property of   operator \eqref{eq-ooper1}  in the Sobolev spaces $H^s$)
essentially depends on the smoothness exponent    $s$. Thus, there arise
natural questions on the description of the possible values of $s$, for which
a given $G$-operator is elliptic and the question about the dependence of the index on   
$s$.  The answers to these questions are well known in the situation of an
isometric action of the group, that is, if the diffeomorphisms preserve a
Riemannian metric on the manifold. In this case the symbol and the index do not
depend on  $s$.  First steps in the study of these questions for nonisometric
actions were done in the papers   \cite{SaSt25,Sav12}, where it was shown for
the simplest nonisometric diffeomorphism of dilation of   spheres that the set
of  $s$, for which a $G$-operator is elliptic, is always an interval and the index
(inside this interval) does not depend on   $s$.

\vspace{1mm}

\noindent\textbf{2. Index of elliptic operators  for a discrete group of
diffeomorphisms.} Let us now turn   attention to the problem of computing the
index of elliptic $G$-operators. The first formula for the index of
$G$-operators was obtained in the paper~\cite{Ant4} for a  {\em finite group}
$G$ of diffeomorphisms.\footnote{These results were rediscovered in \cite{Park1}.} 
In this case the index of a  $G$-operator was expressed
in terms of Lefschetz numbers of an auxiliary elliptic $\psi$DO on   $M$.
Since the Lefschetz numbers are expressed by a formula    \cite{AtSi3} similar to the
Atiyah--Singer index formula and, hence,   the index problem for a
finite group is thus solved.

The index problem for infinite groups turned out to be much more  difficult and
required application of new methods related with noncommutative geometry of
Connes~\cite{Con6,Con1}. The first advance was done in the celebrated work of
Connes~\cite{Con4}. There  an index formula was obtained for operators of the
form
\begin{equation}\label{eq-con1}
    D=\sum_{\alpha\beta} a_{\alpha\beta} x^\alpha (d/dx)^{\beta}
\end{equation}
acting on the real line, where the coefficients $a_{\alpha\beta}$ are Laurent polynomials in the operators
$$
 (Uf)(x)=e^{ix}f(x) , \quad (Vf)(x)=f(x-\theta),
$$
and  $\theta$ is some fixed number. An index theorem of Connes for such
differential-difference operators  is naturally
formulated in terms of noncommutative geometry. Operators  
\eqref{eq-con1}, which are also called operators on the noncommutative
torus\footnote{This name is motivated by the fact that the algebra
generated by   $U$ and $V$ is a noncommutative deformation of the algebra of
functions on the torus   $\mathbb{T}^2$.}, were used in a mathematical
explanation of the quantum Hall effect  \cite{Con1}. It became clear after the
cited papers of Connes that noncommutative geometry is not only useful, but
also natural in the index problem for $G$-operators, and since then
noncommutative geometry is used in all the papers on the index of
$G$-operators, we are aware of. For instance, methods of noncommutative
geometry were applied to solve the index problem for deformations of
algebras of functions on toric manifolds in   \cite{LaSu1,CoDu1,CoLa2} and
other papers.

Further progress in the solution of the index problem for    $G$-operators was made
 in the monograph~\cite{NaSaSt17}. 
Namely, an index formula for operators   \eqref{eq-ooper1}  was obtained in the
situation, when the action is isometric. Let us note here that this index formula for isometric actions contains all
the above mentioned formulas as special cases.

In the situation of a general (that is,  {\em nonisometric}) action there were no index
formulas  until recently. There were only partial results. Namely,
  the index problem for $\mathbb{Z}$-operators (that is, operators for the group of integers)
was reduced to a similar problem for an elliptic $\psi$DO   (see
\cite{Ant2,Sav9,Sav10}). The first index formula in the nonisometric case was
obtained in  the paper\cite{SaSt25} for operators associated with dilation
diffeomorphism of spheres. The index formula for elliptic operators associated
with the group $\mathbb{Z}$ was obtained in \cite{SaSt30}.  Finally, an  index formula for     an arbitrary torsion free group acting  on the circle  was
stated in   \cite{Per4}.

We also  mention  several interesting examples of elliptic   $G$-operators.
Suppose that $G$ preserves some geometric structure on the manifold   (for
instance, Riemannian metric, complex structure, spin structure, ...). Then we
can consider an elliptic operator associated with that structure and twist this
operator using a   $G$-projection (that is an operator of the form
\eqref{eq-ooper1}, which is a projection: $P^2=P$) or an invertible $G$-operator. This construction produces an
elliptic   $G$-operator. For instance, if $G$ acts isometrically, then one can
take classical geometric operators    (Euler, signature,  Dolbeault, Dirac
operators). The indices of the corresponding twisted $G$-operators were
computed in   \cite{SaSt23}.  If $G$ acts by conformal diffeomorphisms of a
Riemannian surface, then one can take the $\overline{\partial}$ operator.
Indices of the corresponding twisted operators were computed in
\cite{Per2,Per3}. In the papers  \cite{CoMo2,Mos2} there are index formulas for
the twisted Dirac operator for group actions preserving the conformal structure
on the manifold.

\vspace{2mm}

\noindent\textbf{3. Operators associated with compact Lie groups.}
Let now $G$ be a {\em compact Lie group } acting on $M$. Consider the class of operators of the form
\begin{equation}\label{eq-ooper2}
    D=\int_G D_gT_g dg: C^\infty(M)\lra C^\infty(M)
\end{equation}
(cf.  \eqref{eq-ooper1}), where $dg$ is the Haar measure. Such operators
related values of functions on submanifolds of $M$ of positive dimension.  Such
operators were considered in  \cite{SaSt22,Ster20,Sav11}. In these papers a $G$-operator
of the form \eqref{eq-ooper2} was represented as a pseudodifferential operator acting in sections of
infinite-dimensional bundles~\cite{Luk1}, whose fiber is the space of functions
on     $G$. This method goes back to the papers of Babbage~\cite{Bab1} and for
a finite group gives a finite system of equations~\cite{Ant2}. Moreover, the
obtained operator, which we denote by $\mathcal{D}$, is  $G$-invariant, and its
restriction $\mathcal{D}^G$ to the subspace of  $G$-invariant functions is
isomorphic to the original operator     $D$. Now if
$\widehat{\mathcal{D}}=1+\mathcal{D}$ is transversally elliptic\footnote{This
notion was introduced by Atiyah and Singer \cite{Ati8,Sin3} and actively
studied since then (see especially~\cite{Kord4,Kord5,Kord2} and the references
cited there).} with respect to the action of  $G$, then this implies the
Fredholm property, that is,  the index of the operator    $\widehat{D}=1+D$ is
finite. The index formula and the corresponding topological invariants of the
symbol of elliptic $G$-operators were computed in the papers cited above.

\vspace{2mm}

\noindent\textbf{4. Other classes of $G$-operators.}
Operators associated with diffeomorphisms are not exhausted by operators of the form  \eqref{eq-ooper1}. In this section we consider   other classes of operators appearing in the literature.  Boundary value problems  similar to Carleman's problem, with the boundary condition relating the values of the unknown function at different points on the boundary were considered  (see monograph of Antonevich, Belousov and Lebedev  \cite{AnLe2} and the references cited there). Finiteness theorems were proved and for the case of finite group actions index theorems were obtained   (see also \cite{SaSt10}). On the other hand, nonlocal boundary value problems,  in which the boundary condition relates the values of a function on the boundary of the domain and on submanifolds, which lie inside the domain  there were considered in    \cite{BiSo3,Sku1,Sku2,Sku3}.   We also mention that  $G$-operators on manifolds with singularities were   considered in\cite{AnLe2}. The symbol was defined and a finiteness theorem was proved.

An important extension of the notion of (Fredholm)  index was obtained in
\cite{MiFo1}. Namely, given a   $C^*$-algebra $A$ (the algebra of scalars) one
considers operators $F$ acting on the spaces, which are $A$-modules. The index
of a Fredholm operator in this setting, also called Mishchenko--Fomenko index
\begin{equation}\label{eq-indmf1}
    \ind_A F\in K_0(A)\vspace{-2mm}
\end{equation}
is an element of the $K$-group of $A$.  Also, in the cited paper a  definition
of pseudodifferential operators over $C^*$-algebras was given and an index
theorem was proved. Note, however, that in applications it is sometimes useful
to have  not only the index \eqref{eq-indmf1} but some {\em numerical }
invariants. Such invariants can be constructed using the approach of
noncommutative geometry by pairing the index  \eqref{eq-indmf1} with cyclic
cocycles over $A$. In the papers
\cite{NaSaSt17,SaSt20,SaSt26} $G$-operators over $C^*$-algebras were defined for isometric actions  and  the finiteness theorem and the index
formula were obtained.

\vspace{2mm}

\noindent\textbf{5. Methods used in the theory of    $G$-operators.}
 Let us know write a few words about the methods used in obtaining these index formulas.
 The first approach, which appears naturally, is to try to adapt the known methods
 of obtaining index formulas for $\psi$DOs in our more general setting of  $G$-operators.
 This approach was successfully applied, for instance, in the book   \cite{NaSaSt17}. 
Note, however, that using  this approach we obtain the proof of the  index
formula, which  is quite nontrivial and relies on serious mathematical results,
notions and constructions from  noncommutative geometry and algebraic
topology.

The second approach   uses the  idea of
uniformization \cite{SaSt22,SaSt30} (see also \cite{SaSchSt1,SaSchSt2,SaSchSt3}) to  reduce the index problem for a   $G$-operator to a similar problem for a   {\em pseudodifferential\/} operator on a manifold of a higher dimension.
The index of the latter operator can be found using the celebrated Atiyah--Singer formula.
The attractiveness of this approach is based on the fact that this approach is quite
elementary and does not require application of complicated mathematical apparatus, which
was mentioned above.  This method of   {\em pseudodifferential uniformization} enabled to
give simple and elegant index formulas.

\vspace{3mm}

Let us now describe the contents of the remaining sections of the paper. In Section 1  we recall the definitions of symbol and the finiteness theorem for $G$-operators associated with actions of discrete groups. Section 2 is devoted to index formulas for actions of discrete groups. We start with the index formula for isometric actions and then give an index formula for nonisometric actions. Finally, Section 3 is devoted to $G$-operators associated with compact Lie group actions. We show how pseudodifferential uniformization can be used to obtain a finiteness theorem for such operators.


\section{Elliptic operators associated with actions of discrete groups}

\subsection{Main definitions}

Let $M$ be a closed smooth manifold and $G$ a discrete group acting on  $M$
by diffeomorphisms. We  consider the class of operators of the form
\begin{equation}\label{op-shift1}
D=\sum_{g\in G} D_gT_g:C^\infty(M)\lra C^\infty(M),
\end{equation}
where $\{D_g\}_{g\in G}$ is a collection of pseudodifferential operators of order $\le m$
acting on   $M$. We suppose that only finitely many   $D_g$'s are nonzero.
Finally, $\{T_g\}$ stands for the representation of $G$ by the shift operators
$$
(T_gu)(x)=u(g^{-1}(x)).
$$
Here and below an element $g\in G$  takes a point $x\in M$  to the point denoted by
 $g(x)\in M$.

Main problems:
\begin{enumerate}
\item Give  {\em ellipticity conditions}, under which the operator
\begin{equation}\label{op-shift1s}
D:H^s(M)\lra H^{s-m}(M),\quad m={\rm ord}\; D.
\end{equation}
is Fredholm in the Sobolev spaces.
\item  Compute the index of operator \eqref{op-shift1s}.
\end{enumerate}
The first of these problems is treated in this section, while the second problem is treated in the subsequent section.

Below operators of the form \eqref{op-shift1} are called {\em $G$-pseudodifferential operators} or {\em $G$-operators} for short.

\subsection{Symbols of operators}

\paragraph{Definition of symbol.}

The action of $G$ on $M$ induces a representation of this group by
automorphisms of the algebra   $C(S^*M)$ of continuous symbols on the cosphere
bundle  $S^*M=T^*_0M/\mathbb{R}_+$. Namely, an element $g\in G$ acts as a shift
operator along the trajectory of the mapping   $\partial g:S^*M\to S^*M$, which
is the extension of $g$  to the cotangent bundle and is defined as  $\partial
g=({}^t(dg))^{-1}$, where $dg:TM\to TM$ is the differential.  Consider the
$C^*$-crossed product $ C(S^*M)\rtimes G$ (e.g., see \cite{Bla1,Ped1,Wil1,Zell1}) of
the algebra $C(S^*M)$ by the action of  $G$. Recall that $ C(S^*M)\rtimes G$ is
the algebra, obtained as a completion of the algebra of compactly-supported
functions on    $G$ with values in  $C(S^*M)$ and the product of two elements
is defined as:
$$
a b(g)=\sum_{kl=g} a(k){k^{-1}}^*(b(l)),\quad k,l\in G.
$$
The completion is taken with respect to a certain norm.\footnote{Below we
consider the so-called maximal crossed product.}  Here for  $k\in G$ by
${k^{-1}}^*:C(S^*M)\to C(S^*M)$ we denote the above mentioned automorphism of  $C(S^*M)$.

To define  the symbol for $G$-operators, it is useful to replace the
shift operator   $T_g:H^s(M)\lra H^s(M)$ by the following unitary operator.
We fix a smooth positive density $\mu$ and a Riemannian metric   on  $M$ and
treat $H^s(M)$ as a Hilbert space with the norm
$$
\|u\|^2_{H^s}=\int_M |(1+\Delta)^{s/2}u|^2 \mu,
$$
where $\Delta$ is the nonnegative Laplacian. A direct computation shows that the operator
$$
{T}_{g,s}=(1+\Delta)^{-s/2}\mu^{-1/2}T_g\mu^{+1/2}(1+\Delta)^{s/2}:H^s(M)\lra
H^s(M)
$$
is unitary. Here
$$
 \mu^{1/2}:L^2(M)\to L^2(M,\Lambda^{1/2})
$$
is the isomorphism of $L^2$ spaces of scalar functions and half-densities on  $M$  defined by multiplication by the square root of    $\mu$.
Note that the operator ${T}_{g,s}$ can be decomposed as
$ {T}_{g,s}=A_{g,s}T_g$, where $A_{g,s}$ is an invertible elliptic $\psi$DO of order zero.

This implies that the class of operators   \eqref{op-shift1} will not
change if in  \eqref{op-shift1} we replace  $T_g$ by  $ {T}_{g,s}$. Now we can give the definition of the symbol.
\begin{definition}\label{def2}
The \emph{symbol} of operator
\begin{equation}\label{eq-shiftss}
    D=\sum_{g\in G}  {D}_g {T}_{g,s}:H^s(M)\lra H^{s-m}(M),
\end{equation}
where $\{ {D}_g\}$ are pseudodifferential operators of order $\le m$ on  $M$, is an element
\begin{equation}\label{eq-symbol1s}
\sigma_s(D)\in C(S^*M)\rtimes G,
\end{equation}
defined by the equality $\sigma_s(D)(g)=\sigma( {D}_g)$ for all $g\in G$.
\end{definition}

The symbol  \eqref{eq-symbol1s} is not completely convenient for applications,
since it depends on the choice of  $\Delta$ and  
$\mu.$ Here we give another definition of the symbol which is free from this drawback.

\paragraph{Trajectory symbol.}
So, let us try to define the symbol of the operator   \eqref{op-shift1s} using
the method of frozen coefficients. Note that the operator is essentially nonlocal.
More precisely, the corresponding equation   $Du=f$ relates values of the unknown
function   $u$ on the   \emph{orbit} $Gx_0\subset M$, rather than  at a single point $x_0\in M$.
For this reason, unlike the classical situation, we need to freeze the coefficients
 of the operator on the entire orbit of    $x_0$. Freezing the
 coefficients of the operator   \eqref{op-shift1s} on the orbit of $x_0$ and applying
 Fourier transform   $x\mapsto \xi$, we can define  the symbol as a function on the
 cotangent bundle    $T^*_0M=T^*M\setminus 0$
with zero section deleted. This function ranges in operators acting on the space of
functions on the orbit. A direct computation gives the following expression for the symbol   (see \cite{Ant2,Sav12}):
\begin{equation}\label{traj-symbol1}
\sigma(D)(x_0,\xi)=\sum_{h\in G}
\sigma(D_h) (g^{-1}(x_0),\partial g^{-1}(\xi))\mathcal{T}_h:l^2(G,\mu_{x_0,\xi,s})\longrightarrow
l^2(G,\mu_{x_0,\xi,s-m}).
\end{equation}
Here we identify the orbit     $Gx_0$ with the group $G$ using the mapping $g(x_0)\mapsto g^{-1}$
and use the following notation:
\begin{itemize}
\item $(\mathcal{T}_hw)(g)=w(g h)$ is the right shift operator on the group;

\item the expression $
 \sigma(D_h) (g^{-1}(x_0),\partial g^{-1}(\xi))$ acts as an operator of multiplication of functions on the group;

\item the space $l^2(G,\mu_{x,\xi,s})$ consists of functions  $\{w(g)\}$,
$g\in G$, which are square summable with respect to the density $\mu_{x_0,\xi,s}$,
which in local coordinates is defined by the expression \cite{Sav12}
\begin{equation}\label{measure-coord1}
\mu_{x_0,\xi,s}(g)=\left|\det \frac{\partial g^{-1}}{\partial x}\right|\cdot
      \left|{  \Bigl.\Bigr.^t}{\left(\frac{\partial g^{-1}}{\partial x}\right)}^{-1}(\xi)\right|^{2s}
\end{equation}
More precisely, here we suppose that the manifold is covered by a finite number
of charts and the diffeomorphism   $g^{-1}$ is written (in some pair of charts)
as $x\mapsto g^{-1}(x)$. The density is unique (up to equivalence of
densities).
\end{itemize}
\begin{definition}\label{def1} The operator \eqref{traj-symbol1} is the {\em
trajectory symbol} of operator \eqref{op-shift1s} at $(x_0,\xi)\in T^*_0M$.
\end{definition}
Note that in general, the dependence of the trajectory symbol on  $x_0,\xi$
is quite complicated.  For instance, the symbol may be discontinuous.   This is related
with the fact that the structure of the orbits of the action can be quite complicated.

Let us describe the relation between the symbols defined in Definitions   \ref{def1} and \ref{def2}.
Given  $(x,\xi)\in S^*M$, we define the representation
$$
\begin{array}{ccc}
   \pi_{x,\xi}:C(S^*M)\rtimes G &\lra &\mathcal{B}l^2(G)\\
   f &\longmapsto & \sum_h f(g^{-1}(x),\partial g^{-1}(\xi),h)\mathcal{T}_h
\end{array}
$$
of the crossed product in the algebra of bounded operators acting on the
standard space   $l^2(G)$ on the group (cf. \eqref{traj-symbol1}). One can show
that the    diagram 
\begin{equation}\label{eq-diag1}
\xymatrix{ l^2(G,\mu_{x,\xi,s})\ar[rr]^{\sigma(D)(x,\xi)} \ar[d]^\simeq& &
l^2(G,\mu_{x,\xi,s-m}) \ar[d]^\simeq\\
l^2(G) \ar[rr]_{\pi_{x,\xi}(\sigma_s(D))} & & l^2(G),}
\end{equation}
commutes, where the vertical mappings are isomorphisms defined by multiplication
by the square root of the densities. In other words, this commutative diagram
shows that the restriction of the symbol   $\sigma_s(D)$ to a trajectory gives the trajectory symbol $\sigma(D)$.

\subsection{Ellipticity and Finiteness theorem}

The  two definitions of the symbol give two notions  of ellipticity.

\begin{definition}\label{d1}
Operator \eqref{op-shift1s} is \emph{elliptic}, if its trajectory symbol \eqref{traj-symbol1} is invertible on $T^*_0M$.
\end{definition}
\begin{definition}\label{d2}
Operator \eqref{op-shift1s} is called \emph{elliptic}, if its symbol
  \eqref{eq-symbol1s} is invertible as an element of the algebra $C(S^*M)\rtimes G$.
\end{definition}
It turns out that these definitions of ellipticity are equivalent, at least for a quite large class of groups.  More precisely, the commutative diagram   \eqref{eq-diag1} shows that ellipticity in the sense of Definition~\ref{d2} implies ellipticity in the sense of Definition~\ref{d1}; the inverse assertion is more complicated and was proved in   \cite{AnLe1} for actions of amenable groups (recall that a discrete group $G$ is amenable, if there is a $G$-invariant mean on $l^\infty(G)$; for more details see, e.g.,  \cite{Pat2}). We suppose that below all groups are amenable and we identify these two notions of ellipticity.

The following finiteness theorem is proved by standard techniques  (see \cite{AnLe1,AnLe2}).
\begin{theorem}\label{th-fredholm}
If   operator \eqref{op-shift1s} is elliptic then it is Fredholm.
\end{theorem}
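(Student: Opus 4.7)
The standard strategy is to construct a parametrix $R\colon H^{s-m}(M)\to H^s(M)$ such that $RD-1$ and $DR-1$ are compact; since the inclusion $H^s(M)\hookrightarrow H^{s-1}(M)$ is compact, this will give the Fredholm property. Because $\sigma_s(D)$ is invertible in the $C^*$-algebra $C(S^*M)\rtimes G$ by hypothesis, there is a natural candidate for the symbol of $R$, namely $\tau:=\sigma_s(D)^{-1}$; the task is to promote $\tau$ to a genuine operator on Sobolev spaces.

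The first step is to set up the symbolic calculus for the class of $G$-operators in question. I would establish three facts: (i) the symbol map $D\mapsto \sigma_s(D)$ is multiplicative modulo compact operators, so that $\sigma_s(D_1D_2)=\sigma_s(D_1)\sigma_s(D_2)$ up to a compact; (ii) a $G$-operator whose symbol vanishes is compact on $H^s(M)$, which reduces to the $\psi$DO case summand by summand via the unitary conjugation described before Definition \ref{def2}; and (iii) the operator norm on $H^s(M)$ of a $G$-operator is dominated, modulo compacts, by the $C^*$-norm of its symbol in $C(S^*M)\rtimes G$. Together with the amenability assumption on $G$, these are the ingredients behind the equivalence of Definitions \ref{d1} and \ref{d2} via diagram \eqref{eq-diag1}.

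Next, since $\tau$ lies in the $C^*$-completion, it need not be a finite sum of the form $\sum_{g} a_g(x,\xi)\,g$ with smooth symbols $a_g$. I would therefore approximate $\tau$ by such finite sums $\tau_n$ so that $\|\tau_n\sigma_s(D)-1\|_{C(S^*M)\rtimes G}<1/2$, and quantize: choose classical $\psi$DO's $A_{g,n}$ of orders $\le -m$ with $\sigma(A_{g,n})=a_{g,n}$ and set $R_n:=\sum_g A_{g,n}T_{g,s}$, so that $\sigma_s(R_n)=\tau_n$. By (i)--(iii), the composition $R_nD-1$ has the form $S_n+K_n$ with $\|S_n\|_{H^s\to H^s}<1/2$ and $K_n$ compact. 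Then $1+S_n$ is invertible and $(1+S_n)^{-1}R_n$ is a left parametrix of $D$ modulo compact operators; a symmetric argument supplies a right parametrix, and the Fredholm property follows by an Atkinson-type criterion.

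The main obstacle is point (iii): controlling the $H^s$-norm of a $G$-operator by the crossed-product norm of its symbol. This uses amenability of $G$ in an essential way, since it guarantees that the maximal and reduced crossed-product norms on $C(S^*M)\rtimes G$ coincide and that the family of trajectory representations $\pi_{x,\xi}$ appearing in \eqref{eq-diag1} is faithful; without amenability, the inverse $\tau$ need not even exist at the level at which we want to quantize it. This is precisely the content of the Antonevich--Lebedev calculus in \cite{AnLe1,AnLe2}, which the authors invoke as \emph{standard techniques}.
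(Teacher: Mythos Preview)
The paper offers no proof of its own here; it merely states that the result ``is proved by standard techniques (see \cite{AnLe1,AnLe2})''. Your parametrix outline --- symbolic calculus for $G$-operators, quantization of an approximant to $\sigma_s(D)^{-1}$, a Neumann-series correction, with amenability supplying the crucial norm estimate in your point~(iii) --- is exactly the Antonevich--Lebedev argument being cited, so there is nothing further to compare.
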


\begin{remark}
It is shown in the cited monographs   \cite{AnLe1,AnLe2} that under certain
quite general assumptions   (namely,  the action of   $G$ on  $M$ is
assumed to be topologically free, that is, for any finite set $\{g_1,...,g_n\}\subset G\setminus\{e\}$ the union $  M^{g_1}\cup...\cup M^{g_n}$ of the fixed point sets has an empty interior), the ellipticity condition is necessary for the Fredholm property. If the
action is not topologically free, then one could give a finer ellipticity
condition. We do not consider these conditions here and refer the reader to the
monograph   \cite{AnLe1}.
\end{remark}

\subsection{Examples}
Let us   illustrate the   notion of ellipticity for $G$-operators on several explicit examples.

{\bf 1. Operators for the irrational rotations of the circle.}
Consider the group $\mathbb{Z}$ of rotations of  the circle $\mathbb{S}^1$
by the angles multiples of a fixed angle    $\theta$ not commensurable to $\pi$:
$$
g(x)=x+g\theta, \quad x\in\mathbb{S}^1,\quad g\in\mathbb{Z},\quad \theta\notin \pi \mathbb{Q}.
$$
A direct computation shows that in this case the densities $\mu_{x,\xi,s}$
(see \eqref{measure-coord1}) are equivalent to the standard density $\mu(g)=1$ on the lattice $\mathbb{Z}$.
Hence, in this case the symbol of the operator
$D=\sum_{g\in\mathbb{Z}}D_gT_g$ is equal to
$$
\sigma(D)(x,\xi)=\sum_h
\sigma(D_h)(x-g\theta,\xi)\mathcal{T}^h:l^2(\mathbb{Z})\to l^2(\mathbb{Z}),\quad \text{where } \mathcal{T}u(g)=u(g-1).
$$
Let us make two remarks. First, in this example, as in the classical theory of $\psi$DOs,
the symbol does not depend on   $s$ and therefore an operator is elliptic or not elliptic for all
 $s$ simultaneously. The same property holds in the general case if the action is isometric.
 Second, in this case to check the ellipticity condition, it suffices to check that the symbol
 is invertible only for one pair of points   $(x_0,\pm 1)$. Indeed, since
 $S^*\mathbb{S}^1=\mathbb{S}^1\cup \mathbb{S}^1$, the crossed product   $C(S^*\mathbb{S}^1)\rtimes\mathbb{Z}$
is a direct sum of two simple algebras\footnote{That is, algebras without nontrivial ideals.}
of irrational rotations $C(\mathbb{S}^1)\rtimes\mathbb{Z}$. Hence, the mapping
$$
 \pi_{x_0,1}\oplus \pi_{x_0,-1}:C(S^*\mathbb{S}^1)\rtimes\mathbb{Z}\lra \mathcal{B}l^2(\mathbb{Z})\oplus  \mathcal{B}l^2(\mathbb{Z})
$$
is  a monomorphism. Therefore,  the symbol $\sigma(D)$ is invertible if and
only the trajectory symbols at the points   $(x_0,\pm  1)$ are invertible.

{\bf 2. Operators for dilations of the sphere \cite{SaSt25}.}
On the sphere $\mathbb{S}^m$ we fix the North and the South poles.
The complements of the poles are identified with   $\mathbb{R}^m$ with
the coordinates $x$ and $x'$, correspondingly. Let us choose the following
transition function  $x'(x)=x|x|^{-2}$. Consider the action of $\mathbb{Z}$
on $\mathbb{S}^m$, which in the $x$-coordinates is generated by the dilations   
$$
g(x)=\alpha^gx,\quad g\in\mathbb{Z},x\in\mathbb{R}^m,
$$
where $\alpha$
($0<\alpha<1$)
is fixed. This expression defines a smooth action on the sphere. Let us compute the densities
$\mu_{x,\xi,s}$.
\begin{proposition}\label{prop1}
Depending on whether   $x$  is a pole of the sphere or not,   the density $\mu_{x,\xi,s}$ in \eqref{measure-coord1} is equal to:
$$
 \mu_{x,\xi, s}(g)=\left\{
\begin{array}{ll}
\alpha^{|g|(m-2s)}, & \text{if  }x\ne0,x\ne \infty,\\
\alpha^{ g (m-2s)}, & \text{if  }x=0,\\
\alpha^{ -g (m-2s)}, & \text{if  }x=\infty.\\
\end{array}
\right.
$$
\end{proposition}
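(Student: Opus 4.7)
The plan is to apply formula \eqref{measure-coord1} directly case by case, the only subtle point being the choice of coordinate charts in which to express $g^{-1}$.

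For the two fixed points $x_0 = 0$ and $x_0 = \infty$, the orbit is a single point and a single chart suffices. In the $x$-chart the action already reads $g(x) = \alpha^g x$; in the $x'$-chart, using the transition function $x' = x/|x|^2$, a short computation rewrites the action as $g(x') = \alpha^{-g} x'$. In either chart the relevant Jacobian is a scalar multiple of the identity, so substituting into \eqref{measure-coord1} and absorbing the $|\xi|^{2s}$-factor into the equivalence of densities immediately yields the claimed exponentials $\alpha^{g(m-2s)}$ at $x_0 = 0$ and $\alpha^{-g(m-2s)}$ at $x_0 = \infty$.

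For generic $x_0 \ne 0, \infty$, the orbit $\{\alpha^g x_0\}_{g\in\mathbb{Z}}$ accumulates at both poles, so the target chart for the point $g^{-1}(x_0)$ must be chosen according to the sign of $g$. I would split the computation into two regimes: for one sign of $g$ the target stays in the $x$-chart and the Jacobian is a pure scalar multiple of the identity; for the opposite sign the target is near $\infty$, so $g^{-1}$ must be composed with the transition function, and the Jacobian acquires the form $\alpha^{\pm g} M(x_0)$ with $M(x_0)$ a matrix depending only on $x_0$. In each regime \eqref{measure-coord1} then produces an exponential of the form $\alpha^{\pm g(m-2s)}$ multiplied by a $g$-independent factor, and the two pieces combine into the uniform expression $\alpha^{|g|(m-2s)}$ modulo the equivalence of densities.

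The principal point requiring care is the generic case: one has to verify that the chart transition $x' = x/|x|^2$ contributes only a factor depending on $(x_0,\xi)$ and not on $g$, so that the two sign-regimes genuinely assemble into the single exponential $\alpha^{|g|(m-2s)}$. Beyond this bookkeeping, the entire proof reduces to routine scalar Jacobian arithmetic.
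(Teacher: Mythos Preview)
Your proposal is correct and follows essentially the same approach as the paper: a direct case-by-case application of formula \eqref{measure-coord1}, with the cases dictated by which chart contains $g^{-1}(x_0)$. The paper organizes the split by the sign of $g$ (treating $g\le 0$ first, where $g^{-1}(x)$ stays bounded in the $x$-chart, and then asserting that the remaining cases follow by a similar computation after passing to the $x'$-chart), whereas you organize it by the type of base point (poles first, then generic $x_0$); the underlying Jacobian arithmetic and the observation that the chart transition contributes only a $g$-independent factor are identical.
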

\begin{proof}
Indeed, given  $g\le 0$ the points $g^{-1}(x)$ remain in a bounded domain of the chart
$\mathbb{S}^m\setminus \infty$. Thus, we can apply the formula
\eqref{measure-coord1}, in which we use the  $x$-coordinate in the domain and the range of the diffeomorphism
 $g$. We get $\partial g^{-1} /\partial
x=\alpha^{-g} I$. Hence
$$
\mu_{x,\xi,s}(g)=\left|{  \Bigl.\Bigr.^t}{\left(\frac{\partial g^{-1}}{\partial x}\right)}^{-1}(\xi)\right|^{2s}=\alpha^{-gm} \cdot |\xi/\alpha^{-g}|^{2s}=
      \alpha^{-g(m-2s)}|\xi|^{2s}.
$$
This gives the desired expression for the measure if  $g \le 0$. Now if $g\to +\infty$ then the points
$g^{-1} (x)=\alpha^{-g}x$ tend to infinity and we can apply  formula
\eqref{measure-coord1}, where we use the pair of coordinate charts   $x$ and  $x'$.
A computation similar to the previous one gives the desired expression for the measure at the poles of the sphere:
$x=0$ and  $x =\infty$.
\end{proof}
Consider the operator
\begin{equation}\label{eq-x1}
 D=\sum_k D_kT^k:H^s(\mathbb{S}^m)\longrightarrow H^s(\mathbb{S}^m),\quad Tu(x)=u(\alpha^{-1}x).
\end{equation}
According to the obtained expressions for the densities, this operator has the
symbol $\sigma(D)(x,\xi)$ at each point $(x,\xi)\in T^*_0\mathbb{S}^m$. For
example,  consider the point   $x=0$. It follows from Proposition~\ref{prop1}
that we obtain an expression for the symbol at this point
\begin{equation*}\label{traj-symbol1z}
\sigma(D)(0,\xi)=\sum_k  \sigma(D_k)
(0,\xi)\mathcal{T}^k:l^2(\mathbb{Z},\mu_{s})\longrightarrow
l^2(\mathbb{Z},\mu_{s}),\quad \mu_s(n)=\alpha^{-n(m-2s)}.
\end{equation*}
The Fourier transform
$\{u(g)\}\mapsto \sum_g u(g)w^{-g}$ takes the latter operator to the operator of  multiplication
\begin{equation*}\label{eq-sym-s}
\sigma_S(D)(\xi,w)=\sum_k \sigma(D_k)\left(0,\xi \right)w^k:
L^2(\mathbb{S}^1)\longrightarrow L^2(\mathbb{S}^1),\quad \xi\in
\mathbb{S}^{m-1}, \quad |w|=\alpha^{-m/2+s}
\end{equation*}
by a smooth function on the circle  $\mathbb{S}^1$ of radius $\alpha^{-m/2+s}$.
This shows that in this example the ellipticity condition explicitly depends on
the smoothness exponent $s$. It was proved inn  \cite{SaSt25}  that  the set of
values of $s$ for which  the operator \eqref{eq-x1} is elliptic is an open
interval (possibly (semi)infinite or empty).

\section{Index formulas for actions of discrete groups}

 In the previous  section we defined the symbol for of a $G$-operator  as an element  of the corresponding crossed product.  If an operator   $D$ elliptic  (its symbol is invertible) then $D$ has Fredholm property and its index   $\ind D$ is defined. To solve the index problem means to  express the index in terms of the symbol of the operator and the topological characteristics  of the  $G$-manifold.

\subsection{Isometric actions}

The index problem for $G$-operators was solved in    2008 for isometric actions
in \cite{NaSaSt17}.  Here we discuss the index formula from the cited monograph.
This formula is proved under the following assumption.

\begin{assumption}
\begin{enumerate}
\item $\Ga$ is a discrete group of  {\em polynomial growth} (see \cite{Gro1}),
i.e.,  the number of elements of the group, whose length is   $\le N$ in the
word metric on the group grows at most as a polynomial in $N$ as  $N\to \infty$. 
\item
$M$ is a Riemannian manifold  and the action of $G$ on $M$ is isometric.
\end{enumerate}
\end{assumption}

\paragraph{Smooth crossed product.}
Let  $D$ be an elliptic operator. Then its symbol is invertible and defines an element
$$
[\sigma(D)]\in K_1(C (S^*M)\rtimes G)
$$
of the odd $K$-group of the crossed product $C (S^*M)\rtimes G$ (e.g., see
\cite{Bla1}). Note a significant difference  between the   elliptic
theory of $G$-operators and the classical Atiyah--Singer theory: the algebra of symbols is not
commutative and therefore we use $K$-theory of algebras instead of topological
$K$-theory. Further, to give an index formula, we will use tools from
noncommutative differential geometry. Note  that noncommutative differential
geometry does not apply in general to  $C^*$-algebras. The point here is that
in a  $C^*$-algebra  there is a notion of continuity, but there is no
differentiability.  Fortunately, in the situation at hand, one can prove that
we only deal with differentiable elements. Let us formulate this
statement precisely.
\begin{proposition}[see \cite{Schwe1}]
If the symbol $\sigma(D)$ is invertible, then the inverse $\sigma(D)^{-1}$  lies in the subalgebra
\begin{equation}\label{eq-smooth1}
C^\infty(S^*M)\rtimes G\subset C(S^*M)\rtimes G,
\end{equation}
of $C^\infty(S^*M)$-valued functions on $G$, which (together  with all their
derivatives) tend to zero as  $|g|\to\infty$ faster than an arbitrary power of
$|g|$.
\end{proposition}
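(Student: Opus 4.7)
The plan is to prove that the smooth subalgebra $\mathcal{A}=C^\infty(S^*M)\rtimes G$ is \emph{spectrally invariant} (equivalently, closed under holomorphic functional calculus) inside the $C^*$-crossed product $A=C(S^*M)\rtimes G$. Once this is established, the conclusion is automatic: if $\sigma(D)\in\mathcal{A}$ is invertible in $A$, then $\sigma(D)^{-1}\in\mathcal{A}$.

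The first step is to equip $\mathcal{A}$ with a Fréchet topology adapted to the two directions in which regularity has to be measured. Fix a word-length function $\ell$ on $G$ and a finite family of vector fields $X_1,\dots,X_N$ spanning $TS^*M$ as a $C^\infty$-module; since $G$ acts by isometries one may even choose the $X_i$ to be $G$-invariant, which makes the action of $G$ by pullback on $C^\infty(S^*M)$ continuous in every derivative seminorm. The defining seminorms of $\mathcal{A}$ are then
\begin{equation*}
\|a\|_{k,\alpha}=\sup_{g\in G}(1+\ell(g))^{k}\,\|X^{\alpha}a(g)\|_{C(S^*M)},
\end{equation*}
where $\alpha$ runs over multi-indices in the $X_i$. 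With these seminorms $\mathcal{A}$ is a dense Fréchet $*$-subalgebra of $A$.

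The second, main step is to prove that every $a\in\mathcal{A}$ with $\|a\|_A<1$ satisfies $\sum_{n\ge 0}a^n\in\mathcal{A}$; by the usual reductions (pass to $a^*a$, use $(1-ab)^{-1}=1+b(1-ab)^{-1}a$) spectral invariance then follows for arbitrary invertible elements. Fix $k$ and $\alpha$ and estimate $\|a^n\|_{k,\alpha}$. Since the product in the crossed product is
\begin{equation*}
(a_1\cdots a_n)(g)=\sum_{g_1\cdots g_n=g}a_1(g_1)\,(g_1^{-1})^*a_2(g_2)\,\cdots\,(g_1\cdots g_{n-1})^{-1 *}a_n(g_n),
\end{equation*}
one distributes the derivatives $X^\alpha$ using the Leibniz rule and the fact that $G$ acts isometrically, so that $\|X^{\beta}(h^*b)\|=\|X^{\beta}b\|$ after re-expressing the pulled-back vector fields in the fixed frame. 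The number of terms with $\ell(g)\ge N$ is then controlled by counting tuples $(g_1,\dots,g_n)$ with $\sum\ell(g_i)\ge N$, which by the polynomial growth assumption is bounded by $C\,n^{D}$ times a Schwartz-type factor absorbed by the seminorms of $a$. This is the Jolissaint/Schweitzer estimate, and it yields $\|a^n\|_{k,\alpha}\le C_{k,\alpha}\,n^{d_{k,\alpha}}\|a\|_A^{n-n_0}$, so the Neumann series converges in $\mathcal{A}$.

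The principal technical obstacle is the interplay of the two types of seminorms under convolution: derivatives in the base direction are displaced by group shifts, while decay in the group direction is tested against exponentially many group elements in generic (non-polynomial-growth) situations. Both difficulties are resolved by the two hypotheses. The isometry assumption bounds the norm of $h^*$ uniformly on jets of any order, decoupling the base derivatives from the group variable; the polynomial growth of $G$ converts the exponential increase in the number of convolution tuples into a merely polynomial factor, which Schwartz decay easily absorbs. The rest of the proof is bookkeeping: verify continuity of multiplication and involution in the Fréchet topology, and then invoke the standard fact that a dense Fréchet $*$-subalgebra of a unital $C^*$-algebra in which geometric series of norm-$<1$ elements converge is holomorphically closed, hence spectrally invariant.
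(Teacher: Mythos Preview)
The paper does not give its own proof of this proposition: it is stated with the attribution ``see \cite{Schwe1}'' and no argument follows. Your proposal is a correct outline of precisely the Schweitzer spectral-invariance argument that the paper is citing, so in that sense your approach and the paper's coincide.

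One small technical slip is worth noting. You write that since $G$ acts by isometries ``one may even choose the $X_i$ to be $G$-invariant.'' This is false in general: an isometric action of a discrete group on a closed manifold need not admit a global frame of $G$-invariant vector fields (already a finite group acting on a sphere gives counterexamples). Fortunately you do not actually need this. What the argument requires, and what you correctly state immediately afterwards, is that the pullback action $h^*$ is uniformly bounded on each derivative seminorm $\|X^\beta(\cdot)\|_{C(S^*M)}$; this follows from isometry because the Jacobians of $h$ and all its iterates are orthogonal, hence have uniformly bounded entries together with all derivatives in any fixed finite atlas. With that uniform bound in hand, the Leibniz/Jolissaint estimate goes through exactly as you indicate, and polynomial growth of $G$ is used precisely where you say, to turn the count of convolution tuples into a polynomial factor absorbed by the rapid-decay seminorms. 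The remainder of your sketch (reduction to Neumann series, holomorphic closure implies spectral invariance) is standard and correct.
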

The subalgebra  \eqref{eq-smooth1} is called the {\em smooth crossed product}.

So, we have
\begin{equation}\label{eq-kth1}
[\sigma(D)]\in K_1(C^\infty (S^*M)\rtimes G).
\end{equation}

To write an index formula for   $D$, we first define a topological  invariant
of the symbol. This invariant is called the Chern character of the element
\eqref{eq-kth1}. Then we define a topological invariant of the manifold.

\paragraph{Equivariant Chern character.}
Following \cite{NaSaSt17}, let us define  the Chern character as the homomorphism of groups
\begin{equation}\label{eq-chch1}
\ch: K_1(C^\infty(X)\rtimes G)\lra \bigoplus_{\langle g\rangle\subset G} H^{odd}(X^g),
\end{equation}
where we put for brevity $X=S^*M$, the sum runs  over conjugacy classes of   $G$,
and $X^g$ denotes the fixed-point set of $g$. Since  $g$  is an isometry by
assumption, the fixed-point set is a smooth submanifold
  (e.g., see~\cite{CoFl1}).

We define  the Chern character using the abstract approach of noncommutative
geometry. To this end, it suffices to define  a pair $(\Omega,\tau)$, where:
\begin{enumerate}
\item   $\Omega=\Omega_0\oplus\Omega_1\oplus\Omega_2\oplus\ldots$  is a
differential  graded algebra, which contains the crossed product
$C^\infty(X)\rtimes G$ as a subalgebra of  $\Omega_0$; 
\item   $\tau:\Omega\lra
\bigoplus_{\langle g\rangle\subset G} \Lambda(X^g)$ is a homomorphism
of differential complexes such that
\begin{equation}\label{eq-sled}
 \tau (\om_2\om_1)=(-1)^{\deg\om_1\deg\om_2}\tau  (\om_1\om_2),\qquad  \text{for all }\om_1,\om_2\in\Om.
\end{equation}
\end{enumerate}
The algebra $\Omega$ is called the   {\em algebra of noncommutative differential
forms},  and the functional  $\tau$ is called the {\em differential graded trace}.

If such a pair is given, then the Chern character  associated with the pair
$(\Omega,\tau)$ is defined as
\begin{equation}\label{eq-chern-odd}
\ch (a)= \tr\tau \left[
 \sum_{n\ge 0}\frac{n!}{(2\pi i)^{n+1}(2n+1)!}
(a^{-1}da)^{2n+1} \right],\quad [a]\in K_1(C^\infty(X)\rtimes G),
\end{equation}
where $\tr$ is the trace of a matrix. A standard computation shows that   the
form in \eqref{eq-chern-odd}  is closed and its class in de Rham cohomology is
determined by   $[a]$ and defines the homomorphism \eqref{eq-chch1}. It remains
to define the pair $(\Omega,\tau)$:

1. We set   $\Omega=\Lambda(X)\rtimes \Ga$, where the differential on the
smooth crossed product of the algebra $\Lambda(X)$ of differential forms on $X$ and the group  $G$
is equal to
$$
(d\om)(g)=d(\om(g)),\qquad \om\in \Lambda(X)\rtimes \Ga.
$$

2. To define a differential graded trace  $\tau=\{\tau_g\}$, we fix some
$g\in\Ga$ and introduce necessary notation. Let   $\overline{G}$ be the closure
of $G$  in the compact Lie group of isometries of   $X$. This closure is a
compact Lie group. Let   $C_g\subset \overline{G}$ be the
centralizer\footnote{Recall that the centralizer of   $g$ is the subgroup of
elements commuting with $g$.} of $g$. The centralizer is a closed Lie subgroup
in   $\overline{G}$.  Denote the elements of the centralizer by   $h$, and the
induced smooth Haar measure on the centralizer by  $dh$.

Let $\langle g\rangle\subset\Ga$ be the conjugacy class of   $g$, i.e., the
set of elements equal to $zgz^{-1}$ for some $z\in \Ga$. Further, for each $g'\in
\langle g\rangle$ with fix some element $z=z(g,g')$, which conjugates $g$ and
$g'=zgz^{-1}$. Any such element defines a diffeomorphism   $z:X^g\to X^{g'}$.

Let us define the trace as
\begin{equation}\label{eq-sled-nash}
    \tau_g(\om)=
      \sum_{g'\in \langle g\rangle}\;\;\;
        \int_{C_g}
           \Bigl.h^*\bigl(
              {z}^*\om(g')
             \bigr)\Bigr|_{X^g}
           dh,\quad \text{where }\omega\in \Lambda(X)\rtimes G.
\end{equation}
One can show that this expression does not depend on the choice of elements $z$ and is indeed a differential graded trace.

\begin{remark}
For a finite group the Chern character~\eqref{eq-chern-odd}
coincides with the one constructed in \cite{Slo1},~\cite{BaCo2}.
\end{remark}

\paragraph{Equivariant Todd class.}

Given $g\in\Ga$, the normal bundle of the fixed-point  submanifold $M^g\subset
M$ is denoted by $N^g$. The differential $dg$ defines an orthogonal
endomorphism of  $N^g$ and the corresponding bundle of exterior forms
$$
 \Omega(N^g_\mathbb{C})=\Omega^{ev}(N^g_\mathbb{C})\oplus
 \Omega^{odd}(N^g_\mathbb{C}).
$$
Here $E_\mathbb{C}$ stands for the complexification of a real vector bundle $E$.
Consider the expression  (see \cite{AtSi3})
\begin{equation}\label{eq-forms}
\ch \Omega^{ev}(N^g_\mathbb{C})(g)-\ch
\Omega^{odd}(N^g_\mathbb{C})(g)\in H^{ev}(M^g).
\end{equation}
The zero-degree component of this expression is nonzero  \cite{AtSe2}.  Hence the class \eqref{eq-forms}
is invertible and the following expression is well-defined
\begin{equation}\label{eq-todd-class}
    \Td_g(T^*_\mathbb{C}M )=\frac{\Td(T^*_\mathbb{C}M^g )}{\ch \Omega^{ev}(N^g_\mathbb{C})(g)-\ch
\Omega^{odd}(N^g_\mathbb{C})(g)}\in H^*(M^g),
\end{equation}
where $\Td$ on  the right-hand side in the equality is the Todd class of a
complex vector bundle, and the expression is well-defined, since the forms have even
degrees.

\paragraph{Index theorem}

\begin{theorem}[see \cite{NaSaSt17}]\label{atsi}
Let $D$ be an elliptic $G$-operator on a closed manifold $M$.
Then
\begin{equation}\label{Cohom1}
\ind D=\sum_{\langle g \rangle\subset\Ga}\left\langle
\ch_g[\sigma(D)]\Td_g(T^*_\mathbb{C}M ), [S^*M^{g}]\right\rangle,
\end{equation}
where $\langle g\rangle$ runs over the set of conjugacy classes of   $\Ga$;
$[S^*M^{g}]\in H_{odd}(S^*M^{g})$ is the fundamental class of  $S^*M^{g}$; the
Todd class is lifted from $M^{g}$ to  $S^*M^{g}$ using the natural projection; the brackets $\langle,\rangle$ denote the pairing of cohomology and
homology. The series in \eqref{Cohom1} is absolutely convergent.
\end{theorem}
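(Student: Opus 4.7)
The plan is to prove the formula by the Chern--Connes pairing in noncommutative geometry, reducing each conjugacy-class term to a classical equivariant Atiyah--Singer--Segal fixed-point contribution. The starting point is the smooth crossed product $\mathcal{A}=C^\infty(S^*M)\rtimes \Ga$ in which $\sigma(D)$ is invertible by the proposition cited just above the theorem. First I would check that $\mathcal{A}$ is a Fr\'echet subalgebra of $C(S^*M)\rtimes \Ga$ closed under holomorphic functional calculus, so that the $K_1$-class of $\sigma(D)$ in the smooth algebra agrees with the one in the $C^*$-algebra; the polynomial growth hypothesis on $\Ga$ is what makes this work, and it is also what will give absolute convergence of the final series.

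Next I would set up the analytic side of the pairing. For each conjugacy class $\langle g\rangle\subset\Ga$ the graded trace $\tau_g$ on $\Om=\Lambda(S^*M)\rtimes\Ga$ defined by \eqref{eq-sled-nash} induces, via \eqref{eq-chern-odd}, a cyclic cocycle on $\mathcal{A}$, and polynomial growth together with smoothness of symbols ensures that the sum over $\langle g\rangle$ and the integral over the centralizer $C_g$ converge absolutely. Standard Connes pairing yields a number $\langle[\sigma(D)],\tau_g\rangle$. The first main step is to identify $\ind D$ with $\sum_{\langle g\rangle}\langle[\sigma(D)],\tau_g\rangle$. I would do this by constructing a parametrix $R$ for $D$ (using ellipticity of the symbol in $\mathcal{A}$), so that $1-RD$ and $1-DR$ are smoothing $\Ga$-operators in a suitable trace ideal, and then applying an equivariant McKean--Singer heat-kernel argument: the supertrace $\tr_s(e^{-t\Di^2}T_{g})$ is independent of $t$, has a short-time limit governed by a localization at $M^g$, and a long-time limit that recovers the $g$-component of the index.

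With the analytic identity in hand, the remaining task is the topological evaluation of each $\langle[\sigma(D)],\tau_g\rangle$. Unwinding \eqref{eq-chern-odd} and \eqref{eq-sled-nash}, the pairing becomes an integral of differential forms on the fixed-point set $S^*M^g$ involving the restriction of the Chern character form of $\sigma(D)$ and the averaged action of the centralizer. A Lefschetz-type fixed-point computation in the normal bundle $N^g$, exactly as in Atiyah--Segal--Singer, converts the transverse contribution into the factor $(\ch\Omega^{ev}(N^g_{\mathbb{C}})(g)-\ch\Omega^{odd}(N^g_{\mathbb{C}})(g))^{-1}$, which combines with the usual Todd class of $T^*_{\mathbb{C}}M^g$ to produce the equivariant Todd class $\Td_g(T^*_{\mathbb{C}}M)$ of \eqref{eq-todd-class}. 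Pairing the resulting class with $[S^*M^g]$ gives the right-hand side of \eqref{Cohom1}.

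The hardest part will be the middle step, namely matching the abstract Connes pairing with the $g$-localized heat-kernel contribution and honestly controlling the sum over $\langle g\rangle$. Isometry of the action is crucial here, as it makes each $M^g$ a smooth submanifold with $g$ acting orthogonally on $N^g$, which is what lets the fixed-point localization run; polynomial growth is needed for convergence at two stages (existence of the smooth subalgebra and absolute convergence of the final series). The bookkeeping required to track the automorphisms $k^{-1*}$ through the differential graded algebra $\Lambda(S^*M)\rtimes\Ga$ and align them with the equivariant fixed-point data on each stratum is the technical heart of the argument, and is where I expect most of the work of the proof to lie.
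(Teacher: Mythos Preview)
The paper is a survey and does not itself prove this theorem: it is stated with the citation ``see \cite{NaSaSt17}'' and the text moves directly on to corollaries and examples. There is therefore no in-paper proof to compare your proposal against.

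That said, your outline is broadly consistent with what the introduction attributes to \cite{NaSaSt17}: an argument based on noncommutative geometry, using the smooth crossed product (polynomial growth being essential for its spectral invariance), the Chern--Connes character built from the differential graded trace $\tau_g$, and a reduction of each $\langle g\rangle$-term to an Atiyah--Segal--Singer type fixed-point contribution. One caution: your ``equivariant McKean--Singer heat-kernel'' step, with expressions like $\tr_s(e^{-t\Di^2}T_g)$, is somewhat schematic. The operator $D$ here is a general elliptic $G$-$\psi$DO, not a Dirac-type operator, and there is no natural $\Di$ in sight; moreover $T_g$ need not commute with $D$, so the trace you write is not obviously constant in $t$. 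The actual analytic bridge in this setting is typically a Connes-type index pairing (boundary map $K_1\to K_0$ plus a trace on smoothing $G$-operators decomposed along conjugacy classes) rather than a literal heat-equation argument, and this is where the ``serious mathematical results'' alluded to in the introduction enter. Your identification of this middle step as the hardest part is correct, but the mechanism you sketch would need to be replaced by the appropriate algebraic-$K$-theoretic or local-index-formula machinery.
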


In some situations the sum in \eqref{Cohom1}  can be reduced to   one
summand equal to the contribution of the unit element of the group.
\begin{corollary}[see \cite{NaSaSt17,SaSt23}]\label{th-e}
Suppose that either the action of  $\Ga$ on $M$ is free or
 $\Ga$ is torsion free. Then one has
\begin{equation}\label{eq-index-2}
    \ind D=\langle \ch_e[\sigma(D)]\Td(T^*_\mathbb{C}M ),[S^*M]\rangle.
\end{equation}
\end{corollary}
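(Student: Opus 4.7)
The strategy is to derive the corollary as a direct specialization of Theorem~\ref{atsi}: under either hypothesis I will argue that every summand in \eqref{Cohom1} with $\langle g\rangle\neq\langle e\rangle$ vanishes, and then identify the $\langle e\rangle$-summand with the right-hand side of \eqref{eq-index-2}.

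Identifying the $g=e$ contribution is bookkeeping. Since $M^e=M$ and the normal bundle $N^e$ has rank zero, $\Omega^{ev}(N^e_\mathbb{C})$ is the trivial line bundle and $\Omega^{odd}(N^e_\mathbb{C})=0$, so the denominator in \eqref{eq-todd-class} equals $1$ and $\Td_e(T^*_\mathbb{C}M)=\Td(T^*_\mathbb{C}M)$. For the Chern character, the graded trace $\tau_e$ of \eqref{eq-sled-nash} reduces to the average of $\omega(e)$ over $C_e=\overline{G}$, and this averaging preserves de~Rham classes, so $\ch_e[\sigma(D)]$ agrees in $H^{odd}(S^*M)$ with the usual Chern character of $\sigma(D)$. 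The free-action case of the vanishing is immediate as well: for every $g\neq e$ the set $M^g$ is empty, hence $S^*M^g=\emptyset$, its fundamental class is zero, and the corresponding summand of \eqref{Cohom1} vanishes automatically.

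The main obstacle is the vanishing in the torsion-free case, where $g\neq e$ may have a positive-dimensional fixed-point set (for instance an isometric irrational rotation can fix a submanifold of positive dimension). The plan is to exploit that torsion-freeness together with isometricity forces the closure $\overline{\langle g\rangle}$ to be a positive-dimensional torus $T\subset\overline{G}$ acting trivially on $M^g$; since $\tau_g$ in \eqref{eq-sled-nash} integrates pull-backs along $C_g\supset T$, this extra $T$-symmetry should realize the integrand of the $\langle g\rangle$-summand as an exact form on $S^*M^g$, so that its pairing with $[S^*M^g]$ vanishes. An alternative route is to invoke the Baum--Connes-type decomposition of $K_*(C_r^*G)$ for torsion-free $G$, which concentrates $[\sigma(D)]$ at the identity conjugacy class and forces the higher components of the equivariant Chern character to vanish at the chain level. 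Either argument is substantially more intricate than the routine $g=e$ identification and the trivial free-action vanishing, and this is where the real work of the proof lies.
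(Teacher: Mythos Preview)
The paper itself supplies no proof of this corollary --- it is stated with citations to \cite{NaSaSt17,SaSt23} --- so there is no in-text argument to compare against. Your treatment of the $\langle e\rangle$-summand and of the free-action case is correct and routine (in fact the right-hand side of \eqref{eq-index-2} is literally the $\langle e\rangle$-term of \eqref{Cohom1}, so no separate ``usual Chern character'' identification is needed). The torsion-free case is indeed where the content lies, and here your proposal has a genuine gap.

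Your torus argument does not work as written. You correctly note that $T=\overline{\langle g\rangle}$ is a positive-dimensional torus contained in $C_g$ and that it acts trivially on $X^g=(S^*M)^g$. But precisely \emph{because} of this triviality, the averaging over $T\subset C_g$ built into $\tau_g$ in \eqref{eq-sled-nash} does nothing to forms restricted to $X^g$: for $h\in T$ and $\alpha\in\Lambda(X)$ one has $(h^*\alpha)|_{X^g}=\alpha|_{X^g}$, since $h\circ\iota=\iota$ for the inclusion $\iota:X^g\hookrightarrow X$. No exactness is produced, and the vanishing mechanism you propose is simply absent. Your second route is closer in spirit but is only a name: the underlying fact, established in the cited references for this setting, is that in the conjugacy-class decomposition of periodic cyclic (co)homology of the smooth crossed product (after Burghelea, Brylinski, Nistor) the summands attached to infinite-order classes pair trivially with the Chern character coming from $K$-theory. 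Phrased dually, the cocycles $\tau_g$ for $g$ of infinite order annihilate the image of $K_1$ under $\ch$. This is the step your proposal flags as ``where the real work lies'' but does not carry out; invoking Baum--Connes for $K_*(C^*_rG)$ by itself does not give it, since what is needed is a statement about the pairing with specific cyclic cocycles, not a description of the $K$-group.
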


Let us note that the index formula   \eqref{Cohom1} contains many other index
 formulas as special cases   (see~\cite{NaSaSt17,SaSt23} for details).
 Here we give two situations, in which the index formula can be applied.

\paragraph{Example 1.  Index of twisted Toeplitz operators.}

Let $M$ be an odd-dimensional oriented manifold. We suppose that
  $M$ is endowed with a  $G$-invariant spin-structure (i.e., the action of  $G$
on $M$ lifts to an action on the spin  bundle $S(M)$). Let $\mathcal{D}$ be the
Dirac operator \cite{AtSi3}
\begin{equation*}\label{eq-dirac}
    \mathcal{D}:S (M)\lra S (M),
\end{equation*}
acting on spinors. This operator is elliptic and self-adjoint. Denote by
$\Pi_+:S (M)\lra S (M)$ the positive spectral projection of this operator.

We define  the Toeplitz operator
\begin{equation}\label{eq-dirac-twist}
    \Pi_+ U:  \Pi_+(S(M))\otimes \mathbb{C}^n\lra      \Pi_+(S(M))\otimes \mathbb{C}^n,
\end{equation}
where $U$ is an invertible  $n$ by $n$  matrix with elements in
$C^\infty(M)\rtimes\Ga$. Then the operator \eqref{eq-dirac-twist} is Fredholm
(its almost-inverse is equal  to  $\Pi_+ U^{-1}$). Let us suppose for
simplicity that  either   $\Ga$ is torsion free, or the action is free. In this
case, the formula
  \eqref{eq-index-2} gives the following expression for the index.
\begin{theorem}\label{th-dirac-index} The index of operator  \eqref{eq-dirac-twist}  is equal to
\begin{equation}\label{eq-dirac-index}
 \ind (\Pi_+ U )=\int_M A(TM)\ch_e(U),
\end{equation}
where  $A(TM)$ is the $A$-class of the tangent bundle, which in the Borel--Hirzebruch formalism is defined by the function
$$
\frac{x/2}{{\rm sh}\;x/2}.
$$
\end{theorem}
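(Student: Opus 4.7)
I intend to apply Corollary~\ref{th-e} to the elliptic $G$-operator $\Pi_+ U$ acting on the ``Hardy-type'' space $\Pi_+(S(M))\otimes\mathbb{C}^n$, and then reduce the resulting cohomological pairing on $S^*M$ to an integral over $M$ by fibre integration along the cosphere bundle.

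First I would check that $\Pi_+ U$ is a zeroth-order elliptic $G$-operator so that Corollary~\ref{th-e} indeed applies. The projection $\Pi_+$ is a classical $\psi$DO of order zero whose principal symbol is the orthogonal projection onto the positive-eigenvalue subbundle $L_+$ of the principal symbol of $\mathcal{D}$; thus the pull-back of the spinor bundle to $S^*M$ splits as $\pi^*S(M)=L_+\oplus L_-$. Since $U$ is invertible in $\mathrm{Mat}_n(C^\infty(M)\rtimes G)$, its action on $L_+\otimes\mathbb{C}^n$ is invertible, so $\Pi_+ U$ is elliptic in the sense of Definition~\ref{d2} and its symbol represents the class
$$[\sigma(\Pi_+ U)] = [U|_{L_+\otimes\mathbb{C}^n}] \in K_1\bigl(C^\infty(S^*M)\rtimes G\bigr).$$

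Second, I would compute the $e$-component of the equivariant Chern character. Because $U$ depends only on $x\in M$ (not on the cotangent variable $\xi$), the $g=e$ summand of $\ch[\sigma(\Pi_+ U)]$ factors through the projection $\pi:S^*M\to M$, giving
$$\ch_e[\sigma(\Pi_+ U)] = \ch(L_+)\cdot\pi^*\ch_e(U),$$
where $\ch(L_+)\in H^{ev}(S^*M)$ is the ordinary Chern character of $L_+$ and $\ch_e(U)\in H^{odd}(M)$ is the $e$-component of~\eqref{eq-chern-odd}. Substituting into Corollary~\ref{th-e} yields
$$\ind(\Pi_+ U) = \bigl\langle \ch(L_+)\,\pi^*\ch_e(U)\,\Td(T^*_\mathbb{C}M),\,[S^*M]\bigr\rangle.$$

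Third, I would perform fibre integration $\pi_*$ along the cosphere fibres and appeal to the classical spin-Dirac identity
$$\pi_*\bigl(\ch(L_+)\cdot\Td(T^*_\mathbb{C}M)\bigr) = A(TM)\in H^{ev}(M),$$
with $A(TM)$ built from the Borel--Hirzebruch series $x/2/\sinh(x/2)$. Combined with the projection formula this gives
$$\ind(\Pi_+ U)=\int_M A(TM)\cdot\ch_e(U),$$
which is the claimed formula.

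The main obstacle is the fibre-integration identity for $\pi_*(\ch(L_+)\Td(T^*_\mathbb{C}M))$. I would either verify it by a direct Chern-root computation, using the description of $L_+$ as the half-spinor subbundle along the fibres together with the Borel--Hirzebruch identity relating the Todd class of the complexified tangent bundle to the square of the $A$-class; or, more economically, deduce it by comparison with the classical Baum--Douglas/Atiyah--Singer Toeplitz index theorem, which is exactly the $G=\{e\}$ specialisation of what we want to prove and therefore forces the identity.
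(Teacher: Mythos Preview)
Your proposal is correct and follows the same route as the paper: the paper simply states that formula~\eqref{eq-index-2} (Corollary~\ref{th-e}) ``gives the following expression for the index'' and defers the details to the references, so your outline is precisely a fleshing-out of that one-line derivation. The three steps you describe --- identifying $[\sigma(\Pi_+U)]$ with the class of $U$ acting on $L_+\otimes\mathbb{C}^n$, factoring $\ch_e$ as $\ch(L_+)\cdot\pi^*\ch_e(U)$, and invoking the standard fibre-integration identity $\pi_*(\ch(L_+)\Td(T^*_\mathbb{C}M))=A(TM)$ --- are exactly what is needed, and your fallback of reading off the last identity from the classical (i.e.\ $G=\{e\}$) Toeplitz index theorem is the economical way to close the argument.
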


\paragraph{Examples 2. Operators on noncommutative torus.}

Let us fix $0<\theta\le 1$.  A. Connes in \cite{Con1} considered differential operators of the form
\begin{equation}\label{eq-operators-connes}
    D=\sum_{\alpha+\beta\le m} a_{\alpha\beta}x^\alpha\left(-i\frac
    d{dx}\right)^\beta:S(\mathbb{R})\lra S(\mathbb{R}),
\end{equation}
in the Schwartz space $S(\mathbb{R})$ on the real line. Here the coefficients $a_{\alpha\beta}$ are Laurent
polynomials in operators
$U,V$
\begin{equation}\label{eq-nctor-1}
    (Uf)(x)=f(x+1),\qquad (Vf)(x)=e^{-2\pi ix/\theta}f(x)
\end{equation}
of shift by one and product by exponential.

Let us show that the operators of the form  \eqref{eq-operators-connes}  reduce
to $G$-operators on a closed manifold. To this end, we consider the real line as
the total space of the standard covering
$$
\mathbb{R}\lra \mathbb{S}^1,
$$
whose base is the circle of length  $\theta$. Then the Schwartz space becomes
isomorphic to the space of smooth sections of a (nontrivial) bundle on the base
$\mathbb{S}^1$, whose fiber is the Schwartz space  $S(\mathbb{Z})$of rapidly
decaying sequences (that is, functions on the fiber). Then we apply Fourier
transform

$$
\mathcal{F}:S(\mathbb{Z})\lra C^\infty(\mathbb{S}^1)
$$
in each fiber and obtain a space, which is the space of smooth sections of  a
complex line bundle over the torus $\mathbb{T}^2$. These transformations define
the isomorphism
\begin{equation}\label{eq-isom1}
    S(\mathbb{R})\simeq C^\infty(\mathbb{T}^2,\gamma)
\end{equation}
of the Schwartz space on the real line and the space
\begin{equation*}\label{bottbundle}
C^\infty(\mathbb{T}^2,\gamma)=\{ g\in C^\infty(\mathbb{R}\times \mathbb{S}^1)
\; |\; g(\varphi+\theta,\psi)=g(\varphi,\psi)e^{-2\pi i\psi}\}.
\end{equation*}
of smooth sections of a complex line bundle  $\gamma$ on the torus. Here  on
$\mathbb{T}^2$ we consider the coordinates $0\le\varphi\le\theta$, $0\le\psi\le
1$. This isomorphism is defined by the formula
$$
f(x)\longmapsto \sum_{n\in\mathbb{Z}} f(\varphi+\theta n)e^{2\pi i n\psi}.
$$
Using the isomorphism \eqref{eq-isom1},  it is easy to obtain the correspondences between the operators:
\begin{equation*}\label{tab-correspondence}
    \begin{tabular}{|c|c|}
      \hline
     operators on the line & operators on the torus \\
      \hline
       $ -i\frac d{dx}$ & $-i\frac \partial{\partial\varphi}$\phantom{$\displaystyle\frac ZZ$}\\
      \hline $x$ &  $-i\frac\theta{2\pi}\frac \partial{\partial\psi}+\psi$ \phantom{$\displaystyle\frac ZZ$}\\
      \hline  $ e^{-2\pi ix/\theta} $ &  $  e^{-2\pi i\varphi/\theta} $\phantom{$\displaystyle\frac ZZ$} \\
      \hline $f(x)\to f(x+1)$& $g(\varphi,\psi)\mapsto g(\varphi+1,\psi)$\phantom{$\displaystyle\frac ZZ$}\\
      \hline
    \end{tabular}
\end{equation*}
This table implies that on the torus we obtain $G$-operators,  which
can be studied using the finiteness theorem and the index formula formulated above.
We refer the reader to  \cite{NaSaSt17} for details.

\subsection{General actions}

In this subsection we survey index formulas for elliptic operators associated
with general actions of discrete groups (see recent papers
\cite{SaSt30} and \cite{Per4}).  Let $D$ be an elliptic operator of the form
\eqref{op-shift1}. We will assume for simplicity that the inverse symbol
$\sigma(D)^{-1}$ lies in the algebraic crossed product
$$
 C^\infty(S^*M)\rtimes_{alg} G\subset C (S^*M)\rtimes G ,
$$
which consists of compactly supported functions on the group. Such a symbol defines an element
\begin{equation}\label{eq-kth2}
[\sigma(D)]\in K_1(C^\infty(S^*M)\rtimes_{alg} G)
\end{equation}
in $K$-theory. We would like to define the topological index as a  numerical
invariant associated with   $[\sigma(D)]$.  There is  a standard procedure in
noncommutative geometry of constructing such invariants. Namely,  one takes the pairing of   \eqref{eq-kth2}  with an element in {\em cyclic cohomology}  of the same algebra. Let us recall
this construction.

\paragraph{Cyclic cohomology. Pairing with $K$-theory.} Let $A$
be an algebra with unit. Recall  (see \cite{Con1}) that the  {\em cyclic
cohomology}  $HC^*(A)$ of $A$ is the cohomology of the bicomplex
\begin{equation}
\begin{array}{cccccc}
 &  &  & & A^{*} & \cdots\\
 & &  &  & \uparrow B &  \\
  &  & A^{* } & \stackrel{b}\to & A^{*2} & \cdots\\
 & & \uparrow B &  & \uparrow B &  \\
A^* & \stackrel{b}\to & A^{*2} & \stackrel{b}\to & A^{*3} & \cdots\\
\vdots &  & \vdots &  & \vdots &
\end{array}
\end{equation}
where $b,B$ are some differentials and for simplicity we denote the space of
multilinear functionals  on   $A^k$ by  $A^{*k}$.  In particular, an element
$\varphi\in HC^{n}(A)$ of cyclic cohomology is represented by a finite
collection of multi-linear functionals
$$
\{\varphi_j(a_0,...,a_j)\},\quad j=n,n-2,n-4,...,
$$
such that $B\varphi_j+b\varphi_{j-2}=0.$

{\small To make the paper self-contained, we recall the formulas for the
differentials in the bicomplex:
\begin{equation}\label{hoch-diff}
  \begin{split}
  (b\varphi)(a_0,a_1,\dotsc,a_{j+1})&=
  \sum_{n=0}^j
  \varphi(a_0,a_1,\dotsc,a_na_{n+1},\dotsc,a_{j+1})
  \\
  &\qquad{}+(-1)^{j+1}\varphi(a_{j+1}a_0,a_1,\dotsc,a_j).
  \end{split}
\end{equation}
and
$
B=Ns(Id-\lambda),
$
where $\lambda=(-1)^{n}(\text{cyclic left shift})$,
$$
s:A^{*(n+1)}\longrightarrow A^{*n},\quad
(s\varphi)(a_0,\ldots,a_{n-1})=\varphi(1,a_0,\ldots,a_{n-1}),
$$
and
$
N:A^{*n}\longrightarrow A^{*n},$ $N=Id+\lambda+\lambda^{2}+\ldots+\lambda^{n-1} $
is the symmetrization mapping.
}

The desired numerical invariants are defined using the pairing
\begin{equation}\label{eq-pairing1}
\langle,\rangle:K_1(A)\times HC^{odd}(A) \lra \mathbb{C}
\end{equation}
of $K$-theory and cyclic cohomology. The value of this pairing on the classes $[a]$ and $[\varphi]$ is equal to
$$
\langle a,\varphi\rangle=\frac 1{\sqrt{2\pi i}}\sum_{k\ge 0} (-1)^k {k!}  \varphi_{2k+1}   (a^{-1},a,\ldots,a^{-1}, a) .
$$

Now to define  the topological index of the element \eqref{eq-kth2},  it
remains to choose a cocycle over the algebra. It turns out that the desired
cocycle can be defined as a special   {\em equivariant characteristic class}
 in cyclic cohomology.

\paragraph{Equivariant characteristic classes.}

Suppose that a discrete group $G$ acts smoothly on a closed smooth  manifold
$X$. We shall also assume that  $X$ is oriented and the  action is
orientation-preserving. Let $E\in \Vect_G(X)$ be a finite-dimensional complex
$G$-bundle on $X$.  Connes defined (e.g., see \cite{Con1}) equivariant
characteristic classes of $E$ with values in cyclic cohomology
$HC^*(C^\infty(X)\rtimes_{alg} G)$ of the crossed product. However, the
formulas for these classes were quite complicated and we do not give them here.
A simple explicit formula was obtained in \cite{Gor1} for the most important
characteristic class, namely, for the equivariant Chern character
\begin{equation}
\ch_G(E) \in HC^*(C^\infty(X)\rtimes_{alg}G).
\end{equation}
More precisely, it was shown in the cited paper that the class    $\ch_G(E)$ is
represented by the collection of functionals   $\{\ch^k_G(E)\}$ defined as
\begin{multline}\label{eq-jlo1a}
\ch^k_G(E;a_0,a_1,...,a_k)=\\
=
\frac{(-1)^{(n-k)/2}}{((n+k)/2)!}
\sum_{i_0+i_1+\ldots+i_k=(n-k)/2} \int_X \tr_E
\bigl[\left( a_0 \theta^{i_0}\nabla(a_1)\theta^{i_1}\nabla(a_2)\ldots \nabla(a_k)\theta^{i_k} \right)_e\bigr]
\end{multline}
(cf.  Jaffe-Lesniewski-Osterwalder formula   \cite{JLO1}). Here
$$
\dim X=n , \quad k=n,n-2,n-4,\ldots,
$$
$\nabla_E$ is a connection in $E$ and $\theta=\nabla_E^2$ is its curvature
form,  for a noncommutative form  $\omega$ by  $\omega_e$ we denote the
coefficient of $T_e=1$, while the operator
$$
\nabla:C^\infty(X )\rtimes_{alg} G\to \Lambda^1(X,\End E)\rtimes_{alg} G
$$
is defined as
$$
\nabla (\sum_g a_g T_g)= \sum_g\bigl[
 da_g -a_g(\nabla_E- (g^{-1})^*\nabla_E) \bigr]T_g.
$$
It is proved in the cited paper that the collection of  functionals
$\{\ch^k_G(E)\}$ defines a cocycle over $C^\infty(X )\rtimes_{alg} G$, and the
class of this cocycle in cyclic cohomology does not depend on the choice  of
connection $\nabla_E$ and coincides with the equivariant Chern character
defined by Connes   \cite{Con1}.

Explicit formulas for other characteristic classes  can be obtained using
standard topological techniques   (operations in $K$-theory, see \cite{Ati2}).
For the index theorem, we need the equivariant Todd class.
\begin{proposition}[\cite{SaSt30}]
The equivariant Todd class
\begin{equation}\label{eq-td1}
 \td_G(E)\in HC^*(C^\infty(X )\rtimes_{alg} G)
\end{equation}
of a complex $G$-bundle $E$ on a smooth manifold $X$ is equal to
$$
 \td_G(E)=\ch_G(\Phi(E)),
$$
here $\Phi$ is the multiplicative operation in $K$-theory, which corresponds to the function
$\varphi(t)=t^{-1}(1+t)\ln(1+t)$.
\end{proposition}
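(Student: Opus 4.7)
The plan is to follow the classical pattern: reduce the identity to a power-series computation on line bundles by exploiting the fact that both sides are multiplicative characteristic classes. The equivariant Todd class $\td_G$ is defined (as in the classical case) by applying a multiplicative extension to the power series $x/(1-e^{-x})$; likewise, $\ch_G \circ \Phi$ is multiplicative because $\Phi:K_G(X)\to K_G(X)\otimes\mathbb{Q}$ is a multiplicative operation by construction, while $\ch_G$ sends tensor products to cup products in $HC^*(C^\infty(X)\rtimes_{alg}G)$. The ring-homomorphism property of $\ch_G$ can be read off directly from the JLO-type cocycle formula \eqref{eq-jlo1a}: on a tensor product $E\otimes F$ one takes the tensor-product connection $\nabla_{E\otimes F}=\nabla_E\otimes 1+1\otimes\nabla_F$ whose curvature splits as $\theta_E\otimes 1+1\otimes\theta_F$, and the standard Leibniz-rule manipulation shows that $\ch_G(E\otimes F)$ equals $\ch_G(E)\smile\ch_G(F)$ at the level of cocycles (up to coboundaries in the $(b,B)$-bicomplex).

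Second, I invoke the equivariant splitting principle: after pullback along the $G$-equivariant flag bundle $\pi:\mathrm{Fl}(E)\to X$, the bundle $E$ splits equivariantly as a sum of line bundles $L_1\oplus\cdots\oplus L_n$, and the induced map
\begin{equation*}
\pi^*:HC^*(C^\infty(X)\rtimes_{alg}G)\longrightarrow HC^*(C^\infty(\mathrm{Fl}(E))\rtimes_{alg}G)
\end{equation*}
is injective (a Leray--Hirsch-type argument: $C^\infty(\mathrm{Fl}(E))$ is finitely generated projective over $C^\infty(X)$, compatibly with $G$, and cyclic cohomology is functorial and detects this splitting). Thus, by multiplicativity, it is enough to verify $\td_G(L)=\ch_G(\Phi(L))$ for a single equivariant line bundle $L$.

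Third, for a line bundle the verification is a purely formal computation. Writing $x=\ch_G(L-1)\in HC^*(C^\infty(X)\rtimes_{alg}G)$ viewed as the equivariant first Chern class (so that $\ch_G(L)=e^x$ in the usual power-series sense), and using that for a multiplicative operation coming from $\varphi$ one has $\ch_G(\Phi(L))=\varphi(\ch_G(L)-1)=\varphi(e^x-1)$, the substitution $t=e^x-1$ gives
\begin{equation*}
\varphi(e^x-1)=\frac{e^x\cdot\ln(e^x)}{e^x-1}=\frac{xe^x}{e^x-1}=\frac{x}{1-e^{-x}},
\end{equation*}
which is precisely the classical Todd generating series $\td(x)$. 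Hence $\ch_G(\Phi(L))=\td_G(L)$, completing the reduction.

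The main obstacle is the equivariant splitting principle at the level of cyclic cohomology of the algebraic crossed product; in the commutative case this is standard, but here one must check that the projective-module structure of $C^\infty(\mathrm{Fl}(E))$ over $C^\infty(X)$ is $G$-equivariant and that the resulting transfer map on $HC^*$ is a left inverse of $\pi^*$. A secondary point, handled by the JLO formula, is to check that $\ch_G$ is truly multiplicative as a map into the bicomplex; the algebraic manipulations are routine but the bookkeeping of the cyclic shuffle products in degrees $n-2k$ should be done carefully to avoid sign errors.
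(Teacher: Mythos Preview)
The paper does not actually prove this proposition: it is a survey, and the statement is simply quoted from \cite{SaSt30} after the remark that ``explicit formulas for other characteristic classes can be obtained using standard topological techniques (operations in $K$-theory, see \cite{Ati2}).'' So there is no in-paper argument to compare against; one can only assess your proposal on its own.

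Your overall strategy is the right one, and the power-series verification
\[
\varphi(e^x-1)=\frac{e^x\ln e^x}{e^x-1}=\frac{x}{1-e^{-x}}
\]
is exactly the computation that pins down the operation $\Phi$. The weak point is the step you yourself flag as ``the main obstacle'': an equivariant splitting principle with injectivity of
\[
\pi^*:HC^*(C^\infty(X)\rtimes_{alg}G)\longrightarrow HC^*(C^\infty(\mathrm{Fl}(E))\rtimes_{alg}G).
\]
Your justification (``finitely generated projective, Leray--Hirsch-type'') is not enough as stated. Leray--Hirsch for cyclic cohomology of algebraic crossed products is not a standard lemma; one would need a genuine transfer/integration-over-the-fibre map compatible with the $(b,B)$-bicomplex and the $G$-action, and proving it is a left inverse to $\pi^*$ is real work rather than bookkeeping.

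The cleaner route, and the one implicit in the paper's reference to ``standard topological techniques,'' is to avoid a geometric splitting altogether. Connes' equivariant characteristic classes (including $\td_G$) are built by a Chern--Weil procedure: one inserts an invariant polynomial in the curvature $\theta=\nabla_E^2$ into the same JLO-type cocycle that defines $\ch_G$. The identity $\td_G=\ch_G\circ\Phi$ is then a statement about invariant polynomials (symmetric functions in the curvature eigenvalues), and reduces immediately to the one-variable identity you already checked. In other words, the ``splitting'' is purely algebraic at the level of symmetric functions and needs no injectivity statement in cyclic cohomology. If you rewrite your argument that way, it becomes complete; as written, the geometric splitting step is a genuine gap.
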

Note that  $\Phi$ can be expressed explicitly in  terms of Grothendieck
operations. For instance, if   $\dim X\le 5$ then  (see \cite{SaSt30})
\begin{multline}\label{eq-fishka3}
\Phi(E)= 1+ \frac {E-n}2+\frac{-2(E^2-2nE+n^2)+7(E+\Lambda^2E-nE+n(n-1)/2)}{12} =\\
= \frac{3n^2-19n+24}{24}+
                           \frac{(-3n+13)}{12}E-\frac 1 6 {E\otimes E} +\frac 7{12}\Lambda^2 E
\end{multline}
where $n=\dim E$.

\paragraph{Index theorems}

\begin{theorem}[\cite{SaSt30}]
Let   $D$ be an elliptic operator associated with the action of group $\mathbb{Z}$. Then we have the index formula
\begin{equation}\label{eq-ind-forla1}
 \ind D=(2\pi i)^{-n}
\langle [\sigma(D)],\td_\mathbb{Z} (\pi^*T^*_\mathbb{C}M)\rangle, \qquad \dim M=n,
\end{equation}
where $\pi: S^*M\lra M$ is the natural projection and  the brackets
$\langle,\rangle$ denote the pairing of  $K$-theory and cyclic cohomology (see
\eqref{eq-pairing1}).
\end{theorem}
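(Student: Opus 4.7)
The plan is to apply the \emph{pseudodifferential uniformization} method advertised in the introduction, reducing the index of $D$ to the index of a classical elliptic $\psi$DO on a closed manifold of one higher dimension, and then translating the resulting Atiyah--Singer contribution into the cyclic-cohomological pairing.

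First I would set up the uniformization. Let $\phi$ be the generator of the $\mathbb{Z}$-action and form the mapping torus
$$
M_\phi := (M\times\mathbb{R})\bigl/\bigl((x,t)\sim(\phi(x),t+1)\bigr),
$$
a closed $(n+1)$-manifold fibering over $S^1$ with fibre $M$. The $\mathbb{Z}$-operator $D=\sum_k D_k T^k$ lifts, via the Schwartz-kernel construction that turns shifts into translations along the $\mathbb{R}$-factor, to a $\phi$-equivariant pseudodifferential family on $M\times\mathbb{R}$ that descends to a $\psi$DO $\widetilde D$ on $M_\phi$. The ellipticity of $D$ in the sense of Definition \ref{d2} is exactly what is needed to guarantee full ellipticity of $\widetilde D$ on $M_\phi$.

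Second, I would establish $\ind D=\ind\widetilde D$. Decomposing sections of the relevant bundle on $M_\phi$ by Fourier series along the $S^1$-direction writes $\widetilde D$ as an $S^1$-parameter family of operators on $M$, whose symbol reduces at the distinguished frequency to $\sigma(D)$. A relative-index / homotopy argument (deforming the $S^1$-direction to localise at a single frequency, compare \cite{SaSt30}) identifies the two Fredholm indices.

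Third, I would apply Atiyah--Singer to $\widetilde D$:
$$
\ind\widetilde D=\bigl\langle \ch[\sigma(\widetilde D)]\,\Td(T^*_{\mathbb{C}}M_\phi),[S^*M_\phi]\bigr\rangle .
$$
Because $M_\phi\to S^1$ is a fibre bundle with fibre $M$, the tangent bundle of $M_\phi$ differs from the vertical tangent bundle by a trivial line, so $\Td(T^*_{\mathbb{C}}M_\phi)$ pulls back from the Todd class of the vertical $T^*_{\mathbb{C}}M$; under the cover $S^*M\times\mathbb{R}\to S^*M_\phi$ this matches the bundle $\pi^*T^*_{\mathbb{C}}M$ appearing in the statement.

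Finally, the core of the argument is the cyclic-cohomological translation. The explicit formula (\ref{eq-jlo1a}) expresses $\ch_{\mathbb{Z}}$ as an iterated integral of connection and curvature forms of a complex bundle coupled to group elements, and the relation $\td_{\mathbb{Z}}=\ch_{\mathbb{Z}}(\Phi(\cdot))$ of the preceding proposition does the same for the Todd class. Using the pairing formula (\ref{eq-pairing1}) applied to $[\sigma(D)]\in K_1(C^\infty(S^*M)\rtimes_{\mathrm{alg}}\mathbb{Z})$ and expanding $(\sigma(D)^{-1}d\sigma(D))^{2k+1}$ into sums indexed by $\mathbb{Z}^{2k+1}$, the group summation furnishes exactly the integration along the $\mathbb{R}$-direction that turns $S^*M\times\mathbb{R}$ into $S^*M_\phi$ under the mapping-torus relation. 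Matching each term then reproduces the Atiyah--Singer integrand on $S^*M_\phi$, with the normalising $(2\pi i)^{-n}$ coming from the standard factor relating $\ch\ch_{2k+1}$ to the pairing with $[a]$.

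The main obstacle is this last step: the combinatorial and analytic bookkeeping that identifies the JLO-type representative (\ref{eq-jlo1a}) of $\ch_{\mathbb{Z}}$, twisted by $\Phi(\pi^*T^*_{\mathbb{C}}M)$, with the ordinary Chern--Todd integrand on $S^*M_\phi$. In particular, one must verify invariance of the JLO expression under change of connection (so that the equivariant connection descending from a $\phi$-invariant one on $M$ may be used), track the trace density on the algebraic crossed product through the fibre integration along $S^1$, and control the absolute convergence of the $\mathbb{Z}$-sum produced by $\sigma(D)^{-1}$, which is only guaranteed because the inverse symbol was assumed to lie in $C^\infty(S^*M)\rtimes_{\mathrm{alg}}\mathbb{Z}$.
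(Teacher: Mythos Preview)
The paper does not actually prove this theorem: it is a survey, and the result is stated with citation to \cite{SaSt30} and no proof is given in the text. So there is no ``paper's own proof'' to compare against in any detailed sense.

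That said, your strategy is consistent with what the paper itself identifies as the method behind \cite{SaSt30}. In the introduction the authors describe the ``pseudodifferential uniformization'' approach---reduce the $G$-operator to a classical $\psi$DO on a higher-dimensional manifold and apply Atiyah--Singer---and explicitly cite \cite{SaSt30} as an instance of it. Your mapping-torus construction $M_\phi$ is the natural realization of this idea for $G=\mathbb{Z}$, and the paper also notes (referring to \cite{Ant2,Sav9,Sav10}) that the $\mathbb{Z}$-index problem had already been reduced to that of an elliptic $\psi$DO, which is precisely your Step~2.

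One point to flag: your claim that ellipticity of $D$ in the sense of Definition~\ref{d2} is ``exactly'' full ellipticity of $\widetilde D$ on $M_\phi$ is too quick as stated. The symbol of $\widetilde D$ in covectors transverse to the fibre direction comes from the $\psi$DO part, but in the $S^1$-direction it is governed by the invertibility of the family, and matching this with invertibility in $C(S^*M)\rtimes\mathbb{Z}$ requires an argument (this is where the spectral/trajectory characterization of crossed-product invertibility enters). Similarly, your ``main obstacle'' paragraph correctly identifies the hard step---turning the Atiyah--Singer integrand on $S^*M_\phi$ into the pairing with the Gorokhovsky cocycle $\td_\mathbb{Z}$---but what you sketch is more a list of desiderata than an argument; in \cite{SaSt30} this identification is the substantive content, and it is not a routine unwinding of \eqref{eq-jlo1a}.
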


\begin{remark}
An index formula for operators on the circle associated  with an action of an
arbitrary torsion free group is announced in \cite{Per4}. The index formula in
this case has the same form as   \eqref{eq-ind-forla1}.
\end{remark}

\paragraph{Examples}

1. Suppose that the (usual)  Todd class
$\td(T^*_\mathbb{C}M)$ is trivial and the diffeomorphisms of the
$\mathbb{Z}$-action are isotopic to the identity. Then one can show that  the equivariant Todd
class is equal to the  transverse fundamental cycle (see \cite{Con8}) of $S^*M$
and the index formula
 \eqref{eq-ind-forla1} is written as:
\begin{equation}\label{eq-nice-index1}
\ind D=\frac{(n-1)!}{(2\pi i)^n(2n-1)!}\int_{S^*M}  (\sigma^{-1}d\sigma)^{2n-1}_e,\qquad \sigma=\sigma(D).
\end{equation}

2. Suppose that the group acts isometrically. Then formula   \eqref{eq-ind-forla1}
reduces to \eqref{eq-index-2} This is obvious ifwe  if we choose in the first formula invariant metric and connection on the cotangent bundle.

\section{Elliptic operators for compact Lie groups}

\subsection{Main definitions}

Let a compact Lie group $G$ act smoothly on a closed smooth manifold  $M$. An element $g\in G$
takes  a point  $x\in M$ to the point denoted by
$g(x)$. We fix a $G$-invariant metric on  $M$  and the   Haar measure on $G$.

Consider the representation  $g\mapsto T_g$ of $G$ in the space $L^2(M)$
by shift operators
$$
 T_gu(x)=u(g^{-1}(x)).
$$

\begin{definition}\label{def-nc-op2}
A  \emph{$G$-pseudodifferential operator} ($G$-$\psi$DO) is an operator
$D:L^2(M)\longrightarrow L^2(M)$ of the form
\begin{equation}\label{eq-oper-nash}
   D=1+ \int\limits_G D_gT_gdg,
\end{equation}
where $D_g$, $g\in G$ is a smooth family of pseudodifferential operators
 of order zero on $M$.
\end{definition}
Consider the   equation
\begin{equation}\label{eq-first}
 u+\int_G D_gT_gu dg=f, \quad u,f\in L^2(M).
\end{equation}
Note that if   $G$ is discrete, then we obtain the class of equations \eqref{op-shift1}.

\paragraph{Example 1. Integro-differential equations on the torus.}
On the torus $\mathbb{T}^2=\mathbb{S}^1\times \mathbb{S}^1$ with coordinates $x_1,x_2$, consider
the integro-dif\-fe\-rential equation
$$
\Delta u(x_1,x_2)+\alpha\frac{\partial^2 }{\partial x_1^2}\int_{\mathbb{S}^1}
u(x_1,y)dy=f(x_1,x_2),
$$
where $\Delta$ stands for the nonnegative Laplace operator, and $\alpha$
is a constant. Let us write this equation as
\begin{equation}\label{eq-op2z}
\Delta u+\alpha\frac{\partial^2 }{\partial x_1^2}\int_{\mathbb{S}^1} T_g dg u=f,
\end{equation}
where $T_g$ denotes the shift operator $ T_g u(x_1,x_2)=u(x_1,x_2-g), $ induced
by the action of the circle  $G=\mathbb{S}^1$ by shifts in    $x_2$. Note that
if we multiply the equation~\eqref{eq-op2z} on the left by the almost inverse
operator $\Delta^{-1}$, we obtain an equation of the type \eqref{eq-first}.

\paragraph{Example 2. Integro-differential equations on the plane.}
Consider the integro-dif\-fe\-rential equation
\begin{multline*}
\Delta u(x,y)+\\ \left(
           \alpha\frac{\partial^2 }{\partial x^2}+
           \beta\frac{\partial^2 }{\partial x\partial y}+
           \gamma \frac{\partial^2 }{\partial y^2}
         \right)\int_{\mathbb{S}^1} u(x \cos\varphi-y\sin\varphi,x\sin\varphi+y\cos\varphi) d\varphi
           =f(x,y)
\end{multline*}
on the plane $\mathbb{R}^2_{x,y}$, where  $\Delta$ is the Laplace operator, and $\alpha,\beta,\gamma$
are constants. This equation can be written as
\begin{equation}\label{eq-rota1}
\Delta u+\left(
           \alpha\frac{\partial^2 }{\partial x^2}+
           \beta\frac{\partial^2 }{\partial x\partial y}+
           \gamma \frac{\partial^2 }{\partial y^2}
         \right)\int_{\mathbb{S}^1} T_\varphi d\varphi u =f,
\end{equation}
where the shift operator $T_\varphi$ is induced by the action of the circle  $G=\mathbb{S}^1$ by rotations
$$
(x,y)\longmapsto (x\cos\varphi+y\sin\varphi,-x\sin\varphi+y\cos\varphi)
$$
around the origin. If we multiply the equation \eqref{eq-rota1} on the left by the almost inverse
operator $\Delta^{-1}$, we obtain an equation similar to \eqref{eq-first}.

\subsection{Pseudodifferential uniformization}

Here we formulate an approach, called {\em pseudodifferential uniformization}, which enables one to
reduce a $G$-pseudodifferential operator
\begin{equation}\label{eq-gg1}
D=1+\int_G D_gT_g dg:L^2(M)\longrightarrow L^2(M).
\end{equation}
to a pseudodifferential operator and then apply
the methods of the theory of pseudodifferential operators.

\paragraph{1. Reduction to a $\psi$DO}
This reduction is constructed as follows.

\begin{enumerate}
    \item[$\bullet$] The operator $D$ is represented as an operator on the quotient  $M/G$
          (the space of orbits).
    \item[$\bullet$] If the action of $G$ on $M$ has no fixed points, then
       $M/G$ is a smooth manifold; moreover,   $D$ can be treated as a $\psi$DO on
    $M/G$ with operator-valued symbol in the sense of Luke \cite{Luk1}  (explanation: this follows from the fact
    that the operator $T_g$ acts only along the fibers of the infinite-dimensional bundle over
    $M/G$, but not along the base).
    \item[$\bullet$] If the  fixed point set  is nonempty,
     then     $M/G$ has singularities; to construct a $\psi$DO  in the case,   we do the following.
    \item[$\bullet$] We lift  $D$ from $M$ to the product $M\times G$
    endowed with the diagonal action of   $G$:
    \begin{equation}\label{diag-action}
     (x,h) \longmapsto (g(x),gh).
    \end{equation}
    \item[$\bullet$] The action \eqref{diag-action} is fixed point free. Hence, the obtained
    $G$-pseudodifferential operator on   $M\times G$, which we denote by  $\widetilde{D}$,
can be represented (see above) as a $\psi$DO on the smooth orbit space  $(M\times G)/G\simeq M$.
\end{enumerate}

These steps give the commutative diagram
\begin{equation}\label{eq-reduction1}
\xymatrix{
  L^2(M)\ar[r]^D\ar[d]_{\pi^*} & L^2(M) \ar[d]^{\pi^*}\\
  L^2(M\times G) \ar[r]^{\widetilde{D}} \ar[d]^{\simeq} & L^2(M\times G) \ar[d]_{\simeq} \\
  L^2(M,L^2(G)) \ar[r]^{\mathcal{D}} & L^2(M,L^2(G)),
 }
\end{equation}
where $\pi^*$ is the induced mapping for the projection $\pi:M\times G\to M$, while
$\mathcal{D}$ is the pseudodifferential operator on $M$.

\begin{remark}\label{rek1}{
The fact that   $\mathcal{D}$ is a $\psi$DO is clear for geometric reasons.
Indeed,      $\widetilde{D}$ has shifts along the orbits of the diagonal action
of   $G$ (see~Figure~1, left
). Clearly, these orbits can be transformed into  vertical orbits (see~Figure~1, right)
by a  change of variables on $M\times G$.
\begin{figure}\label{ris-diag}
 \begin{center}
 \includegraphics[width=3cm
 ]{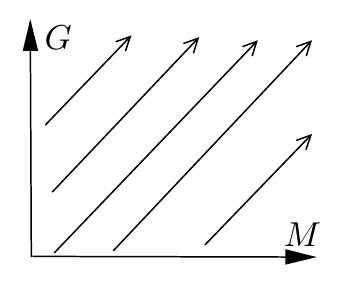}\hspace{2cm}
\includegraphics[width=3cm
 ]{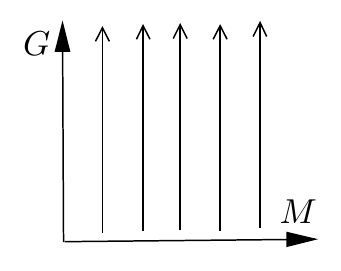}
\caption{The orbits of the diagonal and vertical actions on the product $M\times G$.}
  \end{center}
\end{figure}
The shift operator along the vertical orbits is a $\psi$DO on   $M$.}
\end{remark}

\paragraph{2. Restriction of $\psi$DO to the subspace of invariant sections.}

The mapping $\pi^*$ in \eqref{eq-reduction1} is   a monomorphism.
Its range is the space of  $G$-invariant sections. Hence,
\eqref{eq-reduction1} gives a commutative diagram of the form
$$
          \xymatrix{
           L^2(M)\ar[r]^D \ar[d]^\simeq
                                       & L^2(M) \ar[d]_\simeq
                                                             \\
           L^2(M,L^2(G))^G \ar[r]^{\mathcal{D}^G} & L^2(M,L^2(G))^G,
          }
$$
where $\mathcal{D}^G$ stands for the restriction of   $\mathcal{D}$ to the subspace of
invariant sections, which we denote by   $L^2(M,L^2(G))^G$.

\paragraph{3. Transverse ellipticity.}

It remains to give conditions,   which imply that the restriction
$$
 L^2(M,L^2(G))^G \stackrel{\mathcal{D}^G}\longrightarrow   L^2(M,L^2(G))^G
$$
of   $\mathcal{D}$ to the subspace of  $G$-invariant sections is Fredholm. Let
us note that  invariant sections are constant along the  orbits of the group
action. Hence, it suffices to impose the condition, which guarantees the
Fredholm property,
    only along the transverse directions to the orbit (see~Figure~2
).
\begin{figure}\label{ris-transv}
  \begin{center}
\includegraphics[width=6cm
 ]{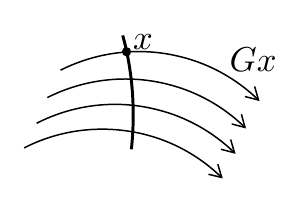}\caption{Transversal to the orbit.}
  \end{center}
\end{figure}

\begin{definition}[\cite{Ati8,Sin3}]\label{defo1}
A pseudodifferential operator $ {\mathcal{D}}$ is    {\em transversally elliptic},
if its symbol $\sigma( {\mathcal{D}})(x,\xi)$ is invertible for all
$(x,\xi)\in T^*_GM \setminus  0$, where
$$
T^*_GM=\{(x,\xi)\in  T^*M \;|\; \text{ covector } \xi \text{ is orthogonal to the orbit }Gx\}.
$$
stands for the {\em transverse cotangent bundle}.
\end{definition}

\begin{theorem}[\cite{SaSt22,Ster20}]
A transversally elliptic  operator $\mathcal{D}$  restricts to a Fredholm operator on the subspace of  $G$-invariant sections
$$
\mathcal{D}^G:L^2(M,L^2(G))^G\longrightarrow L^2(M,L^2(G))^G.
$$
\end{theorem}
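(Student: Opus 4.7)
My plan is to construct a parametrix for $\mathcal{D}^G$ up to compact error by exploiting the fact that transverse ellipticity suffices to invert $\sigma(\mathcal{D})$ in all directions relevant to the invariant subspace. The overall strategy is standard parametrix-plus-remainder, but the crux is that the remainder, while not of negative order as an operator on the full space, gains Sobolev regularity on the $G$-invariant subspace.

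First, using the hypothesis, I would define an operator-valued symbol $q(x,\xi)$ on $T^*M\setminus 0$ with $q(x,\xi)\sigma(\mathcal{D})(x,\xi) = I_{L^2(G)}$ on $T^*_GM\setminus 0$, extending smoothly (via a Seeley-type extension) to all of $T^*M\setminus 0$ and averaging over $G$ to preserve equivariance. Quantizing $q$ in Luke's operator-valued $\psi$DO calculus \cite{Luk1} produces a $\psi$DO $Q$ on $L^2(M,L^2(G))$. Next, analyzing the error $R = Q\mathcal{D} - I$, one sees its principal symbol vanishes on $T^*_GM$. The vanishing ideal of $T^*_GM$ in $T^*M$ is generated, outside the zero section, by the symbols of the fundamental vector fields $X_j$ of the $G$-action, so $\sigma_0(R) = \sum_j \sigma(X_j)\,b_j$ with $b_j$ of order $-1$. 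Quantizing yields a decomposition $R = \sum_j B_j X_j + R_1$ with $B_j$ and $R_1$ of order $-1$.

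The third, crucial step is to show $R$ is compact on $L^2(M,L^2(G))^G$. A diagonally invariant section $\tilde u(x,g)$ satisfies $\tilde u(hx,hg) = \tilde u(x,g)$, which differentiates to $X_j^M\tilde u = -X_j^{G,L}\tilde u$, converting an orbit derivative on $M$ into a left-invariant derivative along the fiber $G$. Decomposing the fiber via Peter--Weyl, $L^2(G) = \widehat\bigoplus_\pi V_\pi^*\otimes V_\pi$, the operator $X_j^{G,L}$ acts as the derived representation on the finite-dimensional space $V_\pi^*$. Combined with the order-$(-1)$ smoothing contributed by $B_j$, and using that the construction is $G$-equivariant, one obtains a genuine Sobolev improvement for $R\tilde u$ on each isotypic component. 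Iterating the parametrix construction produces $Q_N$ with error of arbitrarily negative order on the invariant subspace, so $\mathcal{D}^G$ admits a parametrix modulo compact operators and is therefore Fredholm.

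The main obstacle is the compactness estimate in the third step. On each Peter--Weyl isotypic block compactness is immediate from finite-dimensionality of $V_\pi$, but the uniformity in $\pi$ required to sum over all isotypic components must be extracted from the operator-valued $\psi$DO calculus: the order-$(-1)$ gain from $B_j$ has to dominate the growth of $X_j^{G,L}$ on high representations. This reflects the fact that transverse ellipticity is genuinely weaker than full ellipticity of $\mathcal{D}$, and $G$-invariance of the section compensates for the missing invertibility in the orbit directions precisely through Peter--Weyl summability of the fiber content.
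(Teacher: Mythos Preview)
The paper itself contains no proof of this theorem: it is a survey, and the statement is simply cited from \cite{SaSt22,Ster20} and then immediately followed by a summary of conclusions. So there is no ``paper's own proof'' against which to compare your attempt.

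As for your proposal on its own merits: the overall architecture is the right one and matches how transversally elliptic operators are handled in the finite-rank setting (Atiyah \cite{Ati8}). The identification $X_j^M\tilde u=-X_j^{G,L}\tilde u$ on diagonally invariant sections is exactly the mechanism that trades missing ellipticity along orbits for control in the fibre variable, and your decomposition $R=\sum_j B_jX_j+R_1$ with $B_j,R_1$ of order $-1$ is the standard move. However, the point you yourself flag as ``the main obstacle'' is a genuine gap, not a technicality: the $B_j$ gain one order in the \emph{base} Sobolev scale on $M$, while $X_j^{G,L}$ loses one order in the \emph{fibre} Sobolev scale on $G$, and these are a priori independent scales on $L^2(M,L^2(G))$. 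On a single isotypic block $V_\pi$ the fibre loss is harmless because $\dim V_\pi<\infty$, but to sum over $\pi$ you need a quantitative link between base regularity and fibre regularity on the invariant subspace. That link exists (diagonal invariance forces $M$-orbit regularity and $G$-fibre regularity to coincide, so the relevant Sobolev norms are comparable on $L^2(M,L^2(G))^G$), but you have not stated or proved it; without it the iteration does not yield a compact remainder. In the references \cite{SaSt22,Ster20} this is handled by working directly with the Luke calculus and the explicit form of $\mathcal{D}$ coming from the original $G$-operator $D$, which gives the needed uniform control; your sketch would need an analogous lemma comparing the mixed Sobolev scales on the invariant subspace before the compactness argument closes.
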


Let us summarize the above discussion.

\begin{enumerate}
    \item To a $G$-pseudodifferential operator $D$ we assigned a pseudodifferential operator
     $\mathcal{D}$ such that there is an isomorphism
    \begin{equation}\label{eq-isom6}
    D\simeq \mathcal{D}^G,
    \end{equation}
    where $ \mathcal{D}^G$ is the restriction of $\mathcal{D}$ to the subspace of invariant sections.
    \item If   $\mathcal{D}$ is transversally elliptic, then its restriction
      $\mathcal{D}^G$ is Fredholm. Hence, by virtue of the isomorphism  \eqref{eq-isom6}
    the original   operator $D$ is also Fredholm.
\end{enumerate}

Since we have  isomorphism \eqref{eq-isom6}, we obtain:
$$
\ind D=\ind \mathcal{D}^G.
$$
Using this equation and the  theory of transversally  elliptic
pseudodifferential operators~\cite{Ati8,Kaw1,Verg1}), an index theorem for the 
$G$-operator $D$ was obtained in \cite{SaSt22,Sav11}.  We only mention here
that the main ingredients of the index formula are: 1) the
definition of the symbol of $D$ as an element of the crossed product of the
algebra of functions on the transverse cotangent bundle by the group $G$; 2)
a Chern character mapping on the $K$-theory of this algebra ranging in the
basic cohomology of  fixed point sets of the group action.


\end{document}